\tikzstyle{node1} = [rectangle, rounded corners, drop shadow,
\tikzstyle{node2} = [rectangle, rounded corners, drop shadow,
\tikzstyle{arrow} = [cyan,thick,->,>=Stealth]
\newtheorem{thm}{Theorem}
\newtheorem{lem}{Lemma}
\newtheorem{rem}{Remark}
\newtheorem{pro}{Proposition}
\newtheorem{defn}{Definition}
\newtheorem{cor}{Corollary}
\DeclareMathOperator{\opd}{d\!}
\DeclareMathOperator{\e}{e}
\DeclareMathOperator{\rank}{rank}
\newcommand{\rev}{\color{black}}
\newcommand{\what}{\widehat}
\newcommand{\p}{\mathbf{P}}
\newcommand{\z}{\mathbf{Z}}
\newcommand{\Qp}{Q_{\mathbf{P}}}
\newcommand{\Qz}{Q_{\z}}
\newcommand{\Rp}{R_{\mathbf{P}}}
\newcommand{\Rz}{R_{\z}}
\newcommand{\Up}{\Upsilon_{\mathbf{P}}}
\newcommand{\Uz}{\Upsilon_{\z}}
\newcommand{\G}{\bm{\widehat{\Sigma}}_G}
\begin{document}
	
	\title{\LARGE \bf
		Time-Domain Moment Matching for Second-Order Systems [Extended Version]
		\thanks{The research leading to the results has received funding from: GAR2023 code 71 Research Grant funded and managed by the Patrimony Foundation (Fundatia "Patrimoniu") of the Romanian Academy, from the Recurrent Fund of Donors, Contract No. 260/28.11.2023; The National Program for Research of the National Association of Technical Universities--GNAC ARUT 2023 grant, contract no. 5/06.10.2023.}}

		\author{ Xiaodong~Cheng, \IEEEmembership{Senior Member,~IEEE} and 
Tudor C. Ionescu
\thanks{X. Cheng is with Mathematical and Statistical Methods Group (Biometris), Wageningen University \& Research,
	6700 AA Wageningen, The Netherlands.
	{\tt\small xiaodong.cheng@wur.nl}}
\thanks{T. C. Ionescu is with the Department of Automatic Control and Systems Engineering, Politehnica University of Bucharest, 060042 Bucharest and with the "Gheorghe-Mihoc--Caius Iacob" Institute of Statistical Mathematics and Applied Mathematics of the Romanian Academy, 050711 Bucharest, Romania. E-mail: {\tt tudor.ionescu@upb.ro; tudor.ionescu@ismma.ro}}
}

\maketitle
\vspace*{-13pt}

\begin{abstract}
The paper develops a second-order time-domain moment matching framework for the structure-preserving model reduction of high-dimensional second-order dynamical systems, avoiding the first-order double-sized equivalent representation. The moments of a second-order system are characterized by the solutions of second-order Sylvester equations, leading to families of parameterized second-order reduced models that match the moments of the original system at selected interpolation points. A two-sided moment matching problem is also addressed, yielding a unique second-order reduced system that matches two distinct sets of interpolation points. Furthermore, we construct reduced second-order systems that match the moments of both the transfer function and its first-order derivative. Then, we also discuss how the proposed framework can be extended to multiple-input multiple-output (MIMO) second-order systems through tangential interpolation, and we identify the main open difficulties in extending the derivative-matching and pole-zero placement results to the MIMO setting. The theory is illustrated on a numerical example of vibrating systems.
\end{abstract}

\begin{IEEEkeywords}
Second-order, time-domain, moment matching, structure preserving model reduction.
\end{IEEEkeywords}
\section{Introduction}
Second-order dynamical systems are commonly used to capture the behavior of various physical systems, such as electrical circuits, power systems, mechanical systems, see, e.g., \cite{yan2008second,Dorier2014PowerNetworks,Safaee2021SecondOrder,Morzfeld2010Vibration,Koutsovasilis2008comparison}. In this work, we focus on the dynamics of a single-input and single-output (SISO)  linear time-invariant   second-order system described by
\begin{equation} \label{sys:2o}
\bm{\Sigma}:
\begin{cases} 
	M\ddot{x}(t) + D\dot{x}(t) + Kx(t) & = Bu(t),
	\\ 
	C_1 \dot{x}(t) + C_0  x(t) & =     y(t),
\end{cases}
\end{equation}
with $x(t)\in\mathbb{R}^n,\ u(t)\in\mathbb{R},\ y(t)\in\mathbb{R}$, $\forall t\geq 0$, where $M, D, K \in \mathbb{R}^{n \times n}$ are commonly referred to as the mass, damping, and stiffness matrices in mechanical systems. $B \in \mathbb{R}^{n}$ is the input matrix of external forces, and $C_0, C_1 \in \mathbb{R}^{1 \times n}$ are the output matrices for positions and velocities. The transfer matrix of the system $\bm{\Sigma}$ is given by
\begin{equation*}
W(s)= (C_1s + C_0)(Ms^2 + Ds + K)^{-1}B,
\end{equation*}
with $2n$ finite poles in the symmetric\footnote{A set of complex numbers is symmetric if and only if, for any element in the set, the complex-conjugate counterpart is also in the set, including multiplicities.} set
\begin{equation} \label{Omega}
\Omega: = \{s \in \mathbb{C} \mid \det(Ms^2 + Ds +K) =0 \},
\end{equation}
with $|\Omega| = 2n$.\\
In real applications, the model description \eqref{sys:2o} often has a high dimension $n$, requiring a large amount of computational resources and thus hindering simulation, prediction, and control of such systems.
Therefore, model reduction techniques for second-order dynamical systems of high dimensions have received increasing attention, and reduced order models are indispensable for efficient analysis and optimization of the structured systems. 


The essential problem in model reduction of second-order systems is the preservation of the second-order structure, allowing for a physical interpretation of the resulting approximation. However, a direct application of structure preservation method e.g., \cite{cheng2023optimal} is not necessarily straightforward. Although a second-order system \eqref{sys:2o} can be rewritten in first-order form, yielding the first-order equivalent system, with state vector $[x(t)^\top \  \dot{x}(t)^\top]^\top$, reduced via first-order reduction methods, the resulting reduced-order models typically destroy the second-order structure.  
To preserve the second-order structure, second-order balancing methods have been proposed in, e.g., \cite{meyer1996balancing2o,chahlaoui2006balancing2o,reis2008BT2O,benner2011efficient2O,benner2013improved2O}. The so-called \textit{position and velocity Gramians} are defined as the diagonal blocks in the Gramian matrices of the first-order representation. Then, balanced truncation is performed based on different pairs of position and velocity Gramians. However, unlike the balanced truncation for first-order stable systems, these methods do not generally preserve stability or provide global error bounds. A port-Hamiltonian approach in \cite{hartmann2010BT2O} reduces a second-order system via a generalized Hamiltonian framework and preserves the Hamiltonian structure and stability. The model reduction problem in \cite{Sato2017Riemannian,Yu2021h_2} is tackled by optimization approaches, where reduced systems are constructed as the optimal solution of an $H_2$-optimization problem subject to certain structural constraints.
In \cite{XiaodongTAC20172OROM,cheng2016secondclustering,XiaodongACOM2018Power,Ishizaki2015clustered}, a clustering-based framework is considered to simplify the structure of second-order network systems, and the scheme is based on identifying and aggregating nodal states that have similar responses to external input signals. 

Moment matching techniques provide efficient tools for model reduction of dynamical systems, see 
\cite{antoulas2005approximation,gallivan2004sylvester,antoulas2010interpolatory,Astolfi2010MM,astolfi2020model} for an extensive overview for first-order systems. Using Krylov subspace projection matrices, reduced models are constructed to match the original system at selected interpolation points in the complex plane. Recent extensions to second-order systems are in, e.g.,
\cite{bai2005Arnoldi2o,salimbahrami2order,beattie2005Krylov2o,qiu2018interpolatory2O,Vakilzadeh2018krylov}, with second-order Krylov subspaces introduced to preserve second-order structure. In \cite{beeumen-nimmen-lombaert-meerbergen-IJNME2012}, a Krylov-based approach to the model reduction of second-order systems with structural damping and quadratic output is presented.

A time-domain approach to moment matching has been presented in \cite{Astolfi2010MMCDC,Astolfi2010MM}, where the moments of a system are characterized by the unique solutions of Sylvester equations. There is a one-to-one relation between the moments and the steady-state response of the system at the selected frequencies, i.e., the interpolation points. This time-domain approach has been further developed in e.g., \cite{Ionescu2013AUT,Ionescu2016TwoSided,Ionescu2014SCL} for port-Hamiltonian systems and two-sided moment matching problems.

The paper extends the time-domain moment matching approach to linear second-order systems in \eqref{sys:2o}, yielding a so-called second-order time-domain moment matching framework. Particularly, we represent the moments of $W(s)$ at a set of interpolation points by the unique solution of a second-order Sylvester equation. Thereby, a family of parameterized second-order reduced models is constructed. Using the set of free parameter matrices, we calculate the approximations that preserve stability and passivity. Another contribution is the two-sided second-order moment matching approach, where the second-order approximating model matches the moments of $W(s)$ at two distinct sets of interpolation points. Furthermore, we also study the problem of second-order time-domain moment matching for the first-order derivative of the transfer function of the system \eqref{sys:2o}, $W^\prime(s)=\opd W(s)/\opd s$. The moments are shown to have a one-to-one relation with the steady-state response of the system composed of the state-space representation of $W^\prime(s)$ and two dual signal generators in a cascade form, one exciting the input with signals at the selected frequencies and the other one modulating the resulting outputs. We present a reduced-order model achieving second-order moment matching at both zero and first-order derivatives of $W(s)$. For MIMO systems, interpolation conditions must be formulated in terms of
right and left tangential data rather than scalar transfer-function values.
This distinction is essential when extending time-domain moment matching to
MIMO systems, especially for complex interpolation data and bitangential
interpolation conditions.

\paragraph*{Contributions w.r.t. \cite{cheng-i-iftime-necoara-CDC2024}} Preliminary results have been presented in \cite{cheng-i-iftime-necoara-CDC2024}, without proofs. In this paper, we provide a detailed, systematic framework of second-order time-domain moment matching. Furthermore, we prove that the second-order Sylvester equations involved indeed have unique solutions. The notion of the moment is herein related to the steady-state response of a second-order system driven/driving a (generalized) signal generator defined by the interpolation frequencies. The second-order time-domain moment matching of the derivative of the second-order transfer function is derived.  The main development is carried out for SISO systems, where the derivative
matching and pole-zero placement results can be stated in complete form. In addition, we include an initial MIMO extension based
on tangential interpolation, including one-sided and two-sided reduced
second-order families and a bi-tangential Hermite interpolation property.

The paper is organized as follows. In Section~\ref{sect_prel}, we present
preliminary results regarding time-domain moment matching for linear systems. In Section~\ref{sec:mmsecord},  the moments of second-order systems are characterized with second-order Sylvester equations, and the time-domain moment matching approach for second-order systems is presented. The moment matching problems pertaining to two-sided moment matching, pole placement, and first-order derivatives are discussed in Section~\ref{sec:twosided}. Section~\ref{sec:mimo_extension} discusses the extension toward MIMO
second-order systems using tangential interpolation and identifies the main
open difficulties. Section~\ref{sec:Example} illustrates the proposed approaches using a mass-spring-damper system, and finally, concluding remarks are made in Section~\ref{sec:Conclusion}. The technical proofs of the results are found in the Appendix.
\paragraph*{Notation} $\mathbb{R}$ and $\mathbb{C}$ denote the sets of real and complex numbers, respectively. $\mathbb{C}^-$ and $\mathbb{C}^0$ are the sets of complex numbers with negative real part and zero real part, respectively.  $\emptyset$ is the empty set, and $\mathds{1}_{n \times m}$ represents a matrix with all elements equal to $1$.
For a matrix $A \in \mathbb{R}^{n \times m}$, $A^\top \in \mathbb{R}^{m \times n}$ denotes the transpose of $A$, respectively. $\sigma(A)$ represents the set of the eigenvalues of $A$,  and $\det(A)$ represents the determinant of $A$. Moreover, $A^\dagger$ is the left pseudoinverse of $A$. For a real symmetric matrix $X$, $X > 0$ ($X< 0$) means that $X$ is positive definite (negative definite).

\section{Preliminaries}\label{sect_prel}
\noindent In this section, we recall the notion of time-domain moment matching a stable LTI system of order one, see e.g., \cite{astolfi-TAC2010,i-astolfi-colaneri-SCL2014}.

\subsection{Time-Domain Moment Matching for Linear  Systems}

\noindent Consider a single-input single-output (SISO) linear time-invariant  (LTI) minimal  system
\begin{equation}\label{system}
\begin{split}
	&\Sigma: \;\; \dot x = Ax+Bu, \quad y=Cx,
\end{split}
\end{equation}
with the state $x\in\mathbb{R}^n$, the input $u\in\mathbb{R}$ and the output $y\in\mathbb R$. The transfer function of \eqref{system} is
\begin{equation}\label{tf}
K(s)=C(sI-A)^{-1}B,\quad K:\mathbb{C} \to \mathbb {C}.
\end{equation}
%


The moments \footnote{The term `moment matching' is also used in the rational covariance extension problem. Therein, the moments correspond to given covariance lags of a stochastic process, and the objective is to extend a partial covariance sequence to one consistent with a rational spectral density, see e.g., \cite{byrnes1998convex,byrnes-lindquist-SIAM2008}. } of (\ref{tf}) are defined as follows.
\begin{defn}\label{def_moment}\cite{antoulas-2005,astolfi-TAC2010}
The $k$-moment
of system (\ref{system}) with the transfer function $K$ as in \eqref{tf}, at $s_{1}\in\mathbb C$ is defined by
$$\eta_{k}(s_{1})={(-1)^{k}}/{k!}\left[{\opd^{k}K(s)}/{\opd s^{k}}\right]_{s=s_1}\in\mathbb C.$$
\end{defn}

Pick the symmetric set $\{s_1,\dots, s_\nu\}\subseteq\mathbb C\setminus\sigma(A)$, and let $S\in\mathbb{R}^{\nu\times\nu}$, such that $\sigma(S)=\{s_1,\dots,s_\nu\}$. Let $L\in\mathbb{R}^{1\times\nu}$, such that the pair $(L,S)$ is observable. Let $\Pi\in\mathbb{R}^{n\times\nu}$ be the solution of the Sylvester equation
\begin{equation}\label{eq_Sylvester_Pi}
A\Pi+BL  =  \Pi S.
\end{equation}
Since \eqref{system} is minimal and $\sigma(A)\cap\sigma(S)=\emptyset$, then
$\Pi$ is the unique solution of the equation (\ref{eq_Sylvester_Pi})
and ${\rm rank}\ \Pi=\nu$, see e.g. \cite{desouza-bhattacharyya-LAA1981}.
Then, the moments of (\ref{system}) are characterized as follows.


\begin{pro}
\label{prop_mom_time}
\cite{astolfi-TAC2010}
\label{def_PI}
The moments of system (\ref{system}) at the interpolation points
$\{s_{1},s_{2},...,s_{\nu}\}=\sigma(S)$, such that $\sigma(S)\cap\sigma(A)=\emptyset$, are in one-to-one relation\footnote{By a one-to-one relation between a set of moments and the elements of a matrix, we mean that, for a fixed realization of the signal generator, the moments are uniquely determined by the elements of the matrix, and vice versa. Under a similarity transformation of the signal generator, the representing matrix changes accordingly, while the corresponding moments remain invariant.} with the elements of the matrix $C\Pi$.
\end{pro}

\noindent The following proposition gives necessary and sufficient conditions
for a $\nu$-order system to achieve moment matching.

\begin{pro}
\cite{astolfi-TAC2010}
\label{prop_FGL}
Consider the LTI system
\begin{equation}
	\label{red_mod_F}
	\begin{split}
		&\dot{\xi}=F\xi+Gu,\quad \psi=H\xi,
	\end{split}
\end{equation}
with $\xi(t)\in\mathbb R^\nu,\ \forall t\geq0,\ F\in\mathbb{R}^{\nu\times\nu},\ G\in\mathbb{R}^{\nu}$ and $ H\in\mathbb{R}^{1\times\nu}$, and the corresponding  transfer function
$
K_G(s)=H(sI-F)^{-1}G.
$
Fix $S\in\mathbb{R}^{\nu\times\nu}$ and $L\in\mathbb{R}^{1\times\nu}$,
such that the pair $(L,S)$ is observable and $\sigma(S)\cap\sigma(A)=\emptyset$.
The system (\ref{red_mod_F}) matches the moments of (\ref{system}) at $\sigma(S)$ if and only if
\begin{align*}\label{eq_MM_CPi}
	HP&=C\Pi, \quad \sigma(F)\cap\sigma(S)=\emptyset.
\end{align*}
where $P\in\mathbb{R}^{\nu\times\nu}$ is any invertible matrix uniquely satisfying the Sylvester equation $FP+GL=PS$.
\end{pro}

Throughout the rest of the paper, we consider $\nu<n$. Then \eqref{red_mod_F} is a reduced order model of \eqref{system} matching $\nu$ moments at $\sigma(S)$. We are now ready to present a family of $\nu$ order models parameterized in $G$ that match $\nu$ moments of the given system \eqref{system} at $\sigma(S)$. The reduced system
\begin{equation}\label{redmod_CPi}
\Sigma_{G}: \,\dot{\xi}=(S-GL)\xi+Gu,\quad \psi=C\Pi\xi,
\end{equation}
with the transfer function
\begin{equation}\label{tf_redmod_CPi}
K_G(s)=C\Pi(sI-S+GL)^{-1}G,
\end{equation}
describes a family of $\nu$ order models that achieve moment matching at $\sigma(S)$ \emph{fixed}, i.e,
\begin{enumerate}
\item $ \Sigma_{G}$ matches the moments $C\Pi$ of \eqref{system} $\forall G\in\mathbb R^{\nu}$,
\item $\sigma(S-GL) \cap \sigma(S)=\emptyset$.
\end{enumerate}

\section{Moments and Moment Matching of Second-order System}
\label{sec:mmsecord}
In this section, we lay out the time-domain moment matching framework in the case of second-order systems \eqref{sys:2o} with the transfer function $W(s)$.

\subsection{Moments of Second-Order Systems}
\label{sec:moments}
In this section, we characterize the moments of the second-order system $\bm{\Sigma}$ in \eqref{sys:2o} at a set of interpolation points different from the poles of $\bm{\Sigma}$. 


Following \cite{antoulas2005approximation,Astolfi2010MM}, the moments of a second-order system \eqref{sys:2o} are defined as follows.
\begin{defn} \label{def:moments}
Let $s_\star \in \mathbb{C}$ such that $s_\star \notin \Omega$. The 0-moment of $W(s)$ at $s_\star \in \mathbb{C}$ is the complex number 
\begin{equation*} \label{eq:0-moment}
	\eta_0(s_\star) = W(s_\star) = (C_1s_\star  + C_0) ( Ms_\star^2 + Ds_\star + K)^{-1}B,
\end{equation*}
and the $k$-moment at $s_\star \in \mathbb{C}$ is defined by
\begin{equation} \label{eq:k-moment}
	\eta_k(s_\star)=\frac{(-1)^k}{k!} 
	\left[ \frac{\opd^k}{\opd s^k}W(s) \right]_{s=s_\star}, \ k \geq 1 \ \text{and integer}.
\end{equation}
\end{defn}

Note that the 0-moment of $W(s)$ at $s_\star$ can be written as $\eta_0(s_\star) =  C_0 \Pi + C_1 \Pi s_\star$, where $\Pi$ is the unique solution of
the matrix equation 
$
M \Pi s_\star^2 + D \Pi s_\star + K \Pi = B.
$  
Then, the following lemma is obtained for moments at distinct interpolation points.
\begin{lem} \label{lem:2Omoments1}
Let
\begin{align*}
	S &= \mathrm{diag}(s_1, s_2, \cdots, s_\nu), \ \text{and} \ L = \begin{bmatrix}
		l_1 & ... & l_\nu
	\end{bmatrix}, \\
	Q &= \mathrm{diag}(s_{\nu+1}, s_{\nu+1}, \cdots, s_{2\nu}),\ \text{and} \ R = \begin{bmatrix}
		r_1  & ... & r_\nu 
	\end{bmatrix}^\top,
\end{align*}
where $s_i \in \mathbb{C} \setminus \Omega$, $\forall i = 1,2, \cdots, 2\nu$, $l_i \in \mathbb{C}$, and $r_i \in \mathbb{C}$, $i = 1, 2, ..., \nu$. With the pair $(L,S)$ observable, and $(Q,R)$ controllable, the 0-moments $\eta_0(s_i)$ satisfy 
\begin{align*}
	\begin{bmatrix}
		\eta_0(s_1) & \eta_0(s_2) & \cdots & \eta_0(s_\nu)
	\end{bmatrix} &= C_0 \Pi + C_1 \Pi S, \\
	\begin{bmatrix}
		\eta_0(s_{\nu+1}) & \eta_0(s_{\nu+2}) & \cdots & \eta_0(s_{2\nu})
	\end{bmatrix} &= \Upsilon B,
\end{align*}
where $\Pi$, $\Upsilon \in \mathbb{R}^{n \times \nu}$ satisfy the following second-order Sylvester equations
\begin{subequations}
	\begin{align}
		\label{eq:Sylv_nu}
		M \Pi S^2  + D \Pi S + K\Pi &= BL, \\
		\label{eq:Sylv_nu2}
		Q^2 \Upsilon M   +   Q \Upsilon D + \Upsilon K  &= R C_0 + Q R C_1.
	\end{align}
\end{subequations}
\end{lem}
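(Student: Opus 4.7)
My plan is to exploit the diagonal structure of $S$ and $Q$ to decouple the two second‑order Sylvester equations into $\nu$ independent column (respectively row) equations, each of which can be solved explicitly because $s_i \notin \Omega$ guarantees invertibility of $Ms_i^2 + Ds_i + K$.

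For the first identity, I would write $\Pi = [\pi_1 \ \pi_2 \ \cdots \ \pi_\nu]$ with $\pi_i \in \mathbb{C}^n$. Since $S = \mathrm{diag}(s_1,\ldots,s_\nu)$, equating the $i$-th column of both sides of \eqref{eq:Sylv_nu} gives the decoupled matrix‑vector equation $(Ms_i^2 + Ds_i + K)\pi_i = B l_i$. By hypothesis $s_i \notin \Omega$, so $Ms_i^2+Ds_i+K$ is invertible and $\pi_i = (Ms_i^2+Ds_i+K)^{-1} B l_i$ is uniquely determined; in particular $\Pi$ is the unique solution of \eqref{eq:Sylv_nu}. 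Reading off the $i$-th column of $C_0\Pi + C_1\Pi S$ then yields
\begin{equation*}
C_0 \pi_i + s_i C_1 \pi_i = (C_0 + s_i C_1)(Ms_i^2+Ds_i+K)^{-1} B l_i = W(s_i) l_i = \eta_0(s_i),
\end{equation*}
which is the claimed formula.

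For the second identity the argument is dual. Writing $\Upsilon = [v_1^{*} \ \cdots \ v_\nu^{*}]^{*}$ with each $v_i \in \mathbb{C}^{1\times n}$, the diagonal structure of $Q$ decouples \eqref{eq:Sylv_nu2} row‑by‑row into $v_i (Ms_{\nu+i}^2 + Ds_{\nu+i} + K) = r_i(C_0 + s_{\nu+i} C_1)$, and invertibility of $Ms_{\nu+i}^2+Ds_{\nu+i}+K$ again gives a unique $v_i$. Post‑multiplying by $B$ produces the $i$-th row $v_i B = r_i W(s_{\nu+i})$ of $\Upsilon B$, which is exactly $\eta_0(s_{\nu+i})$ in the left‑tangential sense used throughout the MIMO preliminaries.

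I do not foresee a real obstacle: the diagonality of $S$ and $Q$ turns both second‑order Sylvester equations into decoupled linear systems, and the invertibility condition $s_i \notin \Omega$ handles the rest. The mild bookkeeping point to be careful about is to interpret $\eta_0(s_i)$ in the appropriate tangential sense (right for $i \leq \nu$, left for $i > \nu$), consistent with Section~\ref{sect_MIMO_prel}, so that the formulas $C_0\Pi + C_1\Pi S$ and $\Upsilon B$ have the correct dimensions; the hypotheses of observability of $(L,S)$ and controllability of $(Q,R)$ are not needed for existence and uniqueness of $\Pi$ and $\Upsilon$ at this stage but will be invoked later when constructing a reduced model.
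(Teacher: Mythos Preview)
Your proposal is correct and follows essentially the same approach as the paper: exploit the diagonal structure of $S$ and $Q$ to decouple the second-order Sylvester equations column-by-column (respectively row-by-row), invoke $s_i\notin\Omega$ for invertibility of $Ms_i^2+Ds_i+K$, and then read off the moments directly. Your additional remarks on the tangential interpretation and on the role of the observability/controllability hypotheses are accurate and go slightly beyond what the paper's proof makes explicit.
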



The proof of Lemma~\ref{lem:2Omoments1} is given in Appendix~\ref{app:Lem:2Omoments1}. Furthermore, the characterization of the moments at a single interpolation point with higher-order derivatives is provided in the following lemma. 
\begin{lem} \label{lem:2Omoments2}
Consider \eqref{sys:2o} and $s_\star, z_\star \in \mathbb{C} \setminus \Omega$. 
Let the matrices $S \in \mathbb{R}^{(\nu +1) \times (\nu +1)}$,
$L \in \mathbb{R}^{1 \times (\nu +1)}$ and $Q \in \mathbb{R}^{(\nu +1) \times (\nu +1)}$, $R \in \mathbb{R}^{(\nu +1)\times 1}$ be such that the pair $(L, S)$ is observable, and the pair $(Q,R)$ is controllable, respectively. Suppose $S$ and $Q$ are non-derogatory\footnote{A matrix is called non-derogatory if its minimal and characteristic polynomials are identical.} such that 
\begin{equation*}
	\det(s I - S) = (s - s_\star)^{\nu +1}, \ \det(s I - Q) = (s - z_\star)^{\nu +1}.
\end{equation*}
Then the following statements hold.
\begin{enumerate}
	\item There exists a one-to-one relation between the moments $\eta_0(s_\star)$, $\eta_1(s_\star)$, $\cdots$, $\eta_\nu(s_\star)$ 
	and the matrix $C_0 \Pi + C_1 \Pi S$,
	where $\Pi$ satisfies
	\begin{equation} \label{eq:Sylv_k}
		M \Pi S^2  + D \Pi S + K\Pi = BL.
	\end{equation} 
	
	\item There exists a one-to-one relation between the moments $\eta_0(z_\star)$, $\eta_1(z_\star)$, $\cdots$, $\eta_\nu(z_\star)$ and 
	and the matrix $\Upsilon B$,
	where $\Upsilon$ satisfies
	\begin{equation} \label{eq:Sylv_k2}
		Q^2 \Upsilon M   +   Q \Upsilon D + \Upsilon K  = R C_0 + Q R C_1.
	\end{equation} 
\end{enumerate}
\end{lem}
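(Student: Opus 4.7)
The plan is to reduce the claim to the first-order moment matching result of Proposition~\ref{prop_mom_time} by rewriting $\bm{\Sigma}$ in companion form. I would introduce
\begin{equation*}
\mathcal{A} = \begin{bmatrix} 0 & I_n \\ -M^{-1}K & -M^{-1}D \end{bmatrix},\quad
\mathcal{B} = \begin{bmatrix} 0 \\ M^{-1}B \end{bmatrix},\quad
\mathcal{C} = \begin{bmatrix} C_0 & C_1 \end{bmatrix},
\end{equation*}
so that $(\mathcal{A},\mathcal{B},\mathcal{C})$ realizes the same transfer function $W(s)$ and $\sigma(\mathcal{A})=\Omega$. Because $s_\star,z_\star\notin\Omega$ and $\sigma(S)=\{s_\star\}$, $\sigma(Q)=\{z_\star\}$, the associated first-order Sylvester equations have unique solutions, and Proposition~\ref{prop_mom_time} (together with its left-sided dual) applies directly.

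For part~1, I would take $\mathcal{X}\in\mathbb{C}^{2n\times(\nu+1)}$ to be the unique solution of $\mathcal{A}\mathcal{X}+\mathcal{B}L=\mathcal{X}S$ and partition $\mathcal{X}=\left[\begin{smallmatrix}\Pi\\ \Phi\end{smallmatrix}\right]$ with $\Pi,\Phi\in\mathbb{C}^{n\times(\nu+1)}$. The top block of the equation immediately gives $\Phi=\Pi S$; substituting into the bottom block and premultiplying by $M$ recovers exactly~\eqref{eq:Sylv_k}. Proposition~\ref{prop_mom_time} applied to $(\mathcal{A},\mathcal{B},\mathcal{C})$ then asserts that the moments at $\sigma(S)=\{s_\star\}$---which, because $S$ is non-derogatory of size $\nu+1$, are precisely $\eta_0(s_\star),\dots,\eta_\nu(s_\star)$---stand in one-to-one relation with $\mathcal{C}\mathcal{X}=C_0\Pi+C_1\Phi=C_0\Pi+C_1\Pi S$, as required.

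For part~2, I would perform the same maneuver on the dual (left) Sylvester equation $\mathcal{Y}\mathcal{A}+R\mathcal{C}=Q\mathcal{Y}$. Partition $\mathcal{Y}=[Y_1,\ Y_2]$ with $Y_1,Y_2\in\mathbb{C}^{(\nu+1)\times n}$; the first block column of the equation expresses $Y_1$ in terms of $Y_2$, and substitution into the second block column, combined with the change of variable $\Upsilon=Y_2 M^{-1}$, reproduces~\eqref{eq:Sylv_k2}. The corresponding ``output'' matrix reduces to $\mathcal{Y}\mathcal{B}=Y_2 M^{-1}B=\Upsilon B$, so the left-sided version of Proposition~\ref{prop_mom_time} delivers the claimed one-to-one relation with $\eta_0(z_\star),\dots,\eta_\nu(z_\star)$. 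The only real obstacle I anticipate is bookkeeping: verifying the block identifications cleanly (in particular, justifying the change of variable $\Upsilon=Y_2 M^{-1}$ in the dual case), and noting that Proposition~\ref{prop_mom_time}, although illustrated with a diagonal generator in the preliminaries, extends verbatim to a non-derogatory $S$, so that the $\nu+1$ columns of $\mathcal{C}\mathcal{X}$ encode the first $\nu+1$ derivatives of $W$ at $s_\star$ rather than zeroth-order moments at $\nu+1$ distinct points.
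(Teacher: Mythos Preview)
Your reduction to the first-order companion form is correct and is a genuinely different route from the paper's proof. The paper argues \emph{directly}: it defines $\mathscr{F}(s)=(Ms^2+Ds+K)^{-1}$, derives the recursion
\[
\frac{d^k}{ds^k}\mathscr{F}(s)=-k\,\mathscr{F}(s)\frac{d}{ds}\mathscr{F}(s)^{-1}\frac{d^{k-1}}{ds^{k-1}}\mathscr{F}(s)-\tfrac{k(k-1)}{2}\mathscr{F}(s)\frac{d^2}{ds^2}\mathscr{F}(s)^{-1}\frac{d^{k-2}}{ds^{k-2}}\mathscr{F}(s),
\]
sets $\Pi_k=\tfrac{1}{k!}\bigl[\tfrac{d^k}{ds^k}\mathscr{F}(s)\bigr]_{s=s_\star}Bl_0$, and shows these columns satisfy \eqref{eq:Sylv_k} with the canonical Jordan pair $(\bar L,\bar S)$; a similar computation on $(C_0+C_1s)\mathscr{F}(s)$ yields the explicit identity $\bigl[\eta_0,\dots,\eta_\nu\bigr]=(C_0\Pi+C_1\Pi S)\,\diag\bigl(1,-1,\dots,(-1)^\nu\bigr)$, after which observability gives the passage to a general $(L,S)$.

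Your argument is shorter and more conceptual, but it outsources the combinatorics of higher-order moments to Proposition~\ref{prop_mom_time}, whose statement in the preliminaries is phrased for distinct interpolation points; you are right that the extension to a non-derogatory $S$ is standard in the Astolfi framework, but it is an external input here rather than something proved. The paper's approach, by contrast, is fully self-contained and makes the column-by-column meaning of $\Pi$ (and the sign pattern linking $\eta_k$ to the $k$-th column) completely explicit. It is worth noting that the paper does use exactly your companion-form reduction---but later, in the proof of Theorem~\ref{thm:Moments}, and only to establish \emph{uniqueness} of $\Pi$ and $\Upsilon$; your proposal effectively merges that step with the moment characterization, which is a legitimate economy.
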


The detailed proof is given in Appendix~\ref{app:Lem:2Omoments2}. Now, let us combine the conclusions in Lemma \ref{lem:2Omoments1} and Lemma \ref{lem:2Omoments2}, which leads to the following result.
\begin{thm} \label{thm:Moments}
Consider the second-order system \eqref{sys:2o} with transfer function $W(s)$. Let the matrices $S \in \mathbb{R}^{\nu \times \nu}$,
$L \in \mathbb{R}^{1 \times \nu}$ and $Q \in \mathbb{R}^{\nu \times \nu}$, $R \in \mathbb{R}^{\nu}$ be such that the pair $(L, S)$ is observable, and the pair $(Q,R)$ is controllable, respectively. 
Then, the following statements hold.
\begin{enumerate}
	\item  If $\sigma(S) \cap \Omega = \emptyset$, there is a one-to-one relation between the moments of $W(s)$ at $\sigma(S)$
	and the matrix 
	$C_0 \Pi + C_1 \Pi S$, where $\Pi \in \mathbb{R}^{n \times \nu}$ is the unique
	solution of  
	\begin{equation} \label{eq:Sylv1}
		M \Pi S^2  + D \Pi S + K\Pi = BL.
	\end{equation}
	
	\item  If $\sigma(Q) \cap \Omega = \emptyset$, there is a one-to-one relation between the moments of $W(s)$ at $\sigma(Q)$
	and the matrix 
	$\Upsilon B$, where $\Upsilon \in \mathbb{R}^{\nu \times n}$ is the unique
	solution of  
	\begin{equation} \label{eq:Sylv2}
		Q^2 \Upsilon M   +   Q \Upsilon D + \Upsilon K  = R C_0 + Q R C_1.
	\end{equation}
\end{enumerate}
\end{thm}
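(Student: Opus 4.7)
The plan is to establish Theorem~\ref{thm:Moments} by reducing the general observable (respectively controllable) case to the diagonal case of Lemma~\ref{lem:2Omoments1} and the single-Jordan-block case of Lemma~\ref{lem:2Omoments2} via the Jordan canonical form. First I would establish uniqueness of $\Pi$ solving \eqref{eq:Sylv1}: vectorizing yields a linear system whose coefficient matrix, after bringing $S$ to Schur form, is block upper triangular with diagonal blocks $\lambda_i^2 M + \lambda_i D + K$ indexed by $\lambda_i \in \sigma(S)$. The hypothesis $\sigma(S) \cap \Omega = \emptyset$ forces each such block to be invertible, so $\Pi$ is uniquely determined; uniqueness of $\Upsilon$ in \eqref{eq:Sylv2} follows by a transposed argument.

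Next, decompose $S = T J T^{-1}$ with $J = \mathrm{diag}(J_1, \dots, J_r)$ in Jordan form, each $J_i \in \mathbb{C}^{\nu_i \times \nu_i}$ a single Jordan block at some $\lambda_i$. Setting $\tilde\Pi = \Pi T$ and $\tilde L = L T$, equation \eqref{eq:Sylv1} becomes $M \tilde\Pi J^2 + D \tilde\Pi J + K \tilde\Pi = B \tilde L$. Partitioning $\tilde\Pi = [\tilde\Pi_1 \; \cdots \; \tilde\Pi_r]$ and $\tilde L = [\tilde L_1 \; \cdots \; \tilde L_r]$ conformably with $J$ decouples the system into $r$ independent subsystems
\[
M \tilde\Pi_i J_i^2 + D \tilde\Pi_i J_i + K \tilde\Pi_i = B \tilde L_i, \quad i = 1, \dots, r.
\]
Observability of $(L, S)$ is similarity-invariant and transfers, through a block-wise PBH test, to each sub-pair $(\tilde L_i, J_i)$. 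Since each $J_i$ is non-derogatory with characteristic polynomial $(s - \lambda_i)^{\nu_i}$, Lemma~\ref{lem:2Omoments2} applies per block and yields the one-to-one relation between the moments $\eta_0(\lambda_i), \dots, \eta_{\nu_i - 1}(\lambda_i)$ and the entries of $C_0 \tilde\Pi_i + C_1 \tilde\Pi_i J_i$.

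Assembling these per-block correspondences, the matrix $C_0 \tilde\Pi + C_1 \tilde\Pi J$ is in one-to-one relation with the full set of moments of $W(s)$ at $\sigma(S)$. Since $C_0 \Pi + C_1 \Pi S = (C_0 \tilde\Pi + C_1 \tilde\Pi J)\, T^{-1}$ and $T$ is invertible, the correspondence descends to $C_0 \Pi + C_1 \Pi S$, proving statement~1. Statement~2 follows by a fully dual construction: Jordan-decompose $Q = \hat T \hat J \hat T^{-1}$, row-partition $\hat\Upsilon = \hat T^{-1}\Upsilon$ and $\hat R = \hat T^{-1} R$, and apply the second part of Lemma~\ref{lem:2Omoments2} to every block of $\hat J$.

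The main obstacle I anticipate is handling Jordan blocks that share an eigenvalue: if $\lambda_i = \lambda_j$ for two distinct blocks, Lemma~\ref{lem:2Omoments2} applied to each produces overlapping information about the moments at that common point, and one has to verify that this apparent redundancy is reconciled by the distinct tangential directions $\tilde L_i$, $\tilde L_j$ selected by the observability of $(L,S)$, so that the global one-to-one relation is neither over-counted nor ambiguous. A careful block-wise PBH argument together with explicit bookkeeping of the tangential directions at each repeated eigenvalue should resolve this, but it is the point where the proof demands the most care.
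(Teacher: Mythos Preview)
Your proposal is correct and, for the one-to-one correspondence, is essentially what the paper does implicitly: the paper simply states that the correspondence ``follows from the results in Lemma~\ref{lem:2Omoments1} and Lemma~\ref{lem:2Omoments2}'' and does not spell out the Jordan decomposition and block-wise application of Lemma~\ref{lem:2Omoments2} that you describe. Your version is the rigorous filling-in of that sentence, and the repeated-eigenvalue issue you flag is genuine in the MIMO setting but is handled exactly as you say, by the distinct tangential directions that observability forces via the block PBH test.

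Where you genuinely diverge from the paper is in the \emph{uniqueness} argument. You vectorize \eqref{eq:Sylv1} and use the Schur form of $S$ to exhibit a block upper-triangular coefficient matrix with invertible diagonal blocks $\lambda_i^2 M + \lambda_i D + K$. The paper instead lifts to the companion first-order realization
\[
\mathcal{A} = \begin{bmatrix} 0 & I \\ -M^{-1}K & -M^{-1}D \end{bmatrix},\qquad
\mathcal{B} = \begin{bmatrix} 0 \\ M^{-1}B \end{bmatrix},
\]
invokes the standard first-order Sylvester result (unique $\tilde\Pi$ since $\sigma(\mathcal{A}) = \Omega$ is disjoint from $\sigma(S)$), and then checks that the partition $\tilde\Pi = \bigl[\begin{smallmatrix}\Pi\\ \Pi S\end{smallmatrix}\bigr]$ recovers precisely the solution of \eqref{eq:Sylv1}; the dual construction handles $\Upsilon$. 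The paper's route buys an explicit bridge to the existing first-order moment-matching literature and, as the remark following the theorem points out, a practical way to \emph{compute} $\Pi$ and $\Upsilon$ from standard first-order Sylvester solvers. Your route is more self-contained and does not require $M$ to be invertible, which is a mild but real generalization.
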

The proof of the result is presented in Appendix~\ref{app:thm:Moments}. 
\begin{rem}
The proof of Theorem~\ref{thm:Moments} provides an effective way to obtain $\Pi$ and $\Upsilon$ as the solutions of the second-order Sylvester equations in \eqref{eq:Sylv1} and \eqref{eq:Sylv2}. We can compute the first-order Sylvester equations in \eqref{eq:Sylv1o1} and \eqref{eq:Sylv2o1} to obtain $\widetilde{\Pi}$ and $\widetilde{\Upsilon}$, from which, $\Pi$ and $\Upsilon$ can be uniquely determined. 
\end{rem}

Throughout the rest of the manuscript, we make the working assumption that all the solutions of the second-order Sylvester equations \eqref{eq:Sylv1} and \eqref{eq:Sylv2}, respectively, have full rank.

\subsection{Moment matching-based Reduced-Order Second-Order Systems}

Using the characterization of moments in Theorem~\ref{thm:Moments}, we now define the families of second-order reduced models achieving moment matching at the given interpolation points. The following results are necessary and sufficient
conditions for a low-order system $\bm{\widehat{\Sigma}}$ to achieve moment matching.

\begin{pro} \label{pro:family}
Consider the second-order reduced model 
\begin{equation*} \label{sys:2or}
	\bm{\widehat{\Sigma}}:
	\begin{cases}
		F_2 \ddot{\xi}(t) + F_1 \dot{\xi}(t) + F_0 \xi(t) & = G u (t), \\
		H_1 \dot{\xi}(t) + H_0 \xi(t) & = \psi(t),
	\end{cases}
\end{equation*}
with $\xi(t), \dot{\xi}(t) \in \mathbb{R}^\nu$, $\psi(t)\in\mathbb R$, $F_i \in \mathbb{R}^{\nu \times \nu}$, for $i = 0,1,2$, and $G \in \mathbb{R}^{\nu}$, $H_1, H_0 \in \mathbb{R}^{1 \times \nu}$. Denote the following (symmetric) set 
\begin{equation} \label{eq:hatOmega}
	\widehat{\Omega}: = \{s \in \mathbb{C}: \det(s^2 F_2 + s F_1 + F_0) =0 \},\ |\widehat\Omega| = 2\nu.
\end{equation}
Let $S \in \mathbb{R}^{\nu \times \nu}$,
$L \in \mathbb{R}^{1 \times \nu}$ and $Q \in \mathbb{R}^{\nu \times \nu}$, $R \in \mathbb{R}^{\nu}$ be such that the pair $(L, S)$ is observable, and the pair $(Q,R)$ is controllable, respectively. 
\begin{enumerate}
	\item Assume that 
	$
	\sigma(S) \cap \Omega = \emptyset \ \text{and} \ \sigma(S) \cap \widehat{\Omega} = \emptyset
	$. The reduced system $\bm{\widehat{\Sigma}}$ matches the moments of $\bm{\Sigma}$ at $\sigma(S)$ if and only
	\begin{equation*}\label{eq:MM2O}
		C_0 \Pi + C_1 \Pi S = H_0 P + H_1 P S
	\end{equation*}
	where $P \in \mathbb{R}^{\nu \times \nu}$ is unique solution of the second-order Sylvester equation
	\begin{equation*} \label{eq:Sylv-F}
		F_2 P S^2  + F_1 P S + F_0 P = G L.
	\end{equation*}
	
	\item Assume that 
	$
	\sigma(Q) \cap \Omega = \emptyset \ \text{and} \ \sigma(Q) \cap \widehat{\Omega} = \emptyset
	$.   The reduced system $\bm{\widehat{\Sigma}}$ matches the moments of $\bm{\Sigma}$ at $\sigma(Q)$ if and only if
	\begin{equation}\label{eq:MM2O2}
		\Upsilon B = P G,
	\end{equation}
	where $P \in \mathbb{R}^{\nu \times \nu}$ is unique solution of the second-order Sylvester equation
	\begin{equation*} \label{eq:Sylv-F2}
		Q^2 P F_2   +   Q P F_1 + P F_0  = R H_0 + Q R H_1.
	\end{equation*}
\end{enumerate}
\end{pro}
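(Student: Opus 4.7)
The plan is to prove both statements by applying Theorem~\ref{thm:Moments} separately to the original system $\bm{\Sigma}$ and to the reduced system $\bm{\hat{\Sigma}}$, and then comparing the resulting characterizations of their moments under the common interpolation data.

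For the first statement, the key observation is that $\bm{\hat{\Sigma}}$ has exactly the same second-order structural form as $\bm{\Sigma}$, with $(M, D, K, B, C_0, C_1)$ replaced by $(F_2, F_1, F_0, G, H_0, H_1)$ and with set of poles $\hat{\Omega}$ as in \eqref{eq:hatOmega}. Under the hypothesis $\sigma(S) \cap \hat{\Omega} = \emptyset$, Theorem~\ref{thm:Moments}(1) applies to $\bm{\hat{\Sigma}}$ and produces a unique $P \in \mathbb{C}^{\nu \times \nu}$ solving \eqref{eq:Sylv-F} such that the moments of $\bm{\hat{\Sigma}}$ at $\sigma(S)$ are in one-to-one relation with $H_0 P + H_1 P S$. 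The same theorem applied to $\bm{\Sigma}$ gives the analogous characterization of its moments at $\sigma(S)$ via $C_0 \Pi + C_1 \Pi S$ with $\Pi$ solving \eqref{eq:Sylv1}.

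The crucial point — and the main technical obstacle — is to argue that the bijection between the characterizing matrix and the associated moment sequence depends only on the pair $(L,S)$ and not on the underlying system matrices. I would establish this by tracing the construction in the proof of Lemma~\ref{lem:2Omoments2}: for a Jordan-normalized pair $(\bar L, \bar S)$ the moments are read off column-by-column from $C_0 \Pi + C_1 \Pi \bar S$ (up to the fixed sign diagonal $\Phi_\nu$), and the general observable $(L, S)$ case reduces to this via a similarity transform $T$ determined solely by $(L, S)$. The identical decoding applied to $H_0 P + H_1 P S$ returns the moments of $\bm{\hat{\Sigma}}$. Consequently, the moments of $\bm{\hat{\Sigma}}$ at $\sigma(S)$ coincide with those of $\bm{\Sigma}$ if and only if \eqref{eq:MM2O} holds, yielding both necessity and sufficiency.

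The second statement follows by the dual argument, applying Theorem~\ref{thm:Moments}(2) to both systems: under $\sigma(Q) \cap \hat{\Omega} = \emptyset$, the moments of $\bm{\hat{\Sigma}}$ at $\sigma(Q)$ are in one-to-one relation with $P G$, where $P$ is the unique solution of \eqref{eq:Sylv-F2}, while the moments of $\bm{\Sigma}$ are characterized by $\Upsilon B$ with $\Upsilon$ solving \eqml{eq:Sylv2}. Since the extraction map is again fixed by the shared controllable pair $(Q,R)$, matching the moments of the two systems is equivalent to matching the characterizing matrices, i.e.\ to \eqref{eq:MM2O2}. The existence and uniqueness of $P$ in both Sylvester equations \eqref{eq:Sylv-F} and \eqref{eq:Sylv-F2} are guaranteed by the disjointness hypotheses together with the uniqueness argument used at the end of the proof of Theorem~\ref{thm:Moments}.
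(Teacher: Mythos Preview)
Your proposal is correct and is essentially the approach the paper has in mind: the paper does not spell out a proof but simply states that ``the proof follows a similar reasoning as in \cite{Astolfi2010MM,Ionescu2016TwoSided},'' which amounts precisely to applying the moment characterization (here Theorem~\ref{thm:Moments}) to both $\bm{\Sigma}$ and $\bm{\hat{\Sigma}}$ and observing that the decoding map from the matrix $C_0\Pi + C_1\Pi S$ (resp.\ $\Upsilon B$) to the moments depends only on $(L,S)$ (resp.\ $(Q,R)$). Your additional care in justifying that the bijection is system-independent via the Jordan normalization in Lemma~\ref{lem:2Omoments2} makes explicit what the cited first-order references leave implicit.
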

The proof follows a similar reasoning as in \cite{Astolfi2010MM,Ionescu2016TwoSided}. Choosing $P = I_\nu$, we then obtain the family of
second-order reduced model $\bm{\widehat{\Sigma}}_G:$
\begin{equation} \label{sys:2orG}
\begin{cases}
	F_2 \ddot{\xi} + F_1 \dot{\xi} + (GL - F_2 S^2 - F_1 S) \xi & = G u , \\
	H_1 \dot{\xi} + (C_0\Pi + C_1\Pi S  - H_1S) \xi & = \psi(t),
\end{cases}
\end{equation}
parameterized by $F_1$, $F_2$, $G$ and $H_1$, and matches the moments of $\bm\Sigma$ at $\sigma(S)$. Analogously, the reduced model $\bm{\widehat{\Sigma}}_H$:
\begin{equation}\label{sys:2orH}
\begin{cases}
	F_2 \ddot{\xi} + F_1 \dot{\xi} + (RH_0 + Q R H_1 - Q^2 F_2 - Q F_1) \xi &=  \Upsilon B u , \\
	H_1   \dot{\xi} + H_0   \xi & =  \psi(t),
\end{cases}
\end{equation}
parameterized by $F_1, F_2$, $H_0$, and $H_1$, matches the moments of $\bm\Sigma$ at $\sigma(Q)$.  

Throughout the rest of the manuscript, we assume that the resulting models $\bm{\widehat{\Sigma}}$, computed with the selected data, are such that $\det(s^2 F_2 + s F_1 + F_0)\not\equiv 0$ and $\widehat\Omega$ is finite, with $|\widehat\Omega| = 2\nu$. If the constraint is, by chance, not satisfied, then the data are altered to satisfy the constraint. 

\subsection{Stability and Passivity Preserving Moment Matching}
\label{sec:stabilitypassivity}
Based on the families of $\nu$-order models in \eqref{sys:2orG} and \eqref{sys:2orH},  we derive second-order $\nu$-dimensional models that not only match the moments of the original system $\bm{\Sigma}$ at a prescribed set of finite interpolation points but also preserve stability and passivity of $\bm{\Sigma}$.

The second-order system $\bm{\Sigma}$ in \eqref{sys:2o} is asymptotically stable if $
M > 0, \ D > 0, \ \text{and} \ K > 0 
$ \cite{bernstein1995second}.
It immediately leads to the following results.
\begin{enumerate}
\item 
The second-order reduced system $\bm{\widehat{\Sigma}}_G$ is asymptotically stable for any $G$, $F_2 > 0$,  and $F_1>0$ that satisfy
\begin{equation} \label{eq:stabcond1}
	GL - F_2 S^2 - F_1 S > 0.
\end{equation}

\item  The second-order reduced system $\bm{\widehat{\Sigma}}_H$ is asymptotically stable for any  $H_0$, $H_1$, $F_2 > 0$,  and $F_1>0$ that satisfy
\begin{equation}\label{eq:stabcond2}
	RH_0 + Q R H_1 - Q^2F_2  - Q F_1 >0.
\end{equation}
\end{enumerate} 

Note that both \eqref{eq:stabcond1} and \eqref{eq:stabcond2} are linear matrix inequalities (LMIs), which are computed via standard LMI solvers, e.g, YALMIP and CVX. Furthermore, with free parameters $F_1, F_2 > 0$, and $G \in \mathbb{R}^{\nu}$, there always exists a solution for \eqref{eq:stabcond1}. Similarly, with $F_1, F_2 > 0$, and $H_1, H_0\in \mathbb{R}^{1 \times \nu}$, a solution for \eqref{eq:stabcond2} is also guaranteed. Thereby, we present a particular choice of these parameters in a special case.
\begin{pro}
Consider $S$ and $Q$ with negative real eigenvalues such that
$
S = T^{-1} \Lambda_S T, \ Q = Z \Lambda_Q Z^{-1},
$ 
with $\Lambda_S, \Lambda_Q < 0$ diagonal and $T, Z$ nonsingular.
\begin{enumerate}
	\item Let 
	$
	F_1 = T^\top D_S T, \ F_2 = T^\top E_S T, \ G = L^\top,
	$
	with diagonal matrices $D_S>0$ and $E_S$ such that $0<E_S<-D_S\Lambda_S^{-1}$.
	Then, the reduced system $\bm{\widehat{\Sigma}}_G$ is asymptotically stable.
	\item Let 
	$
	F_1 = Z D_Q Z^\top, \ F_2 = Z E_Q Z^\top, \ H_0 = R^\top, \ H_1 = R^\top Q^\top
	$
	with diagonal matrices $D_Q>0$ and $E_Q$ such that $0<E_Q<-D_Q\Lambda_Q^{-1}$.
	Then, the second-order reduced system $\bm{\widehat{\Sigma}}_H$ is asymptotically stable.
\end{enumerate}
\end{pro}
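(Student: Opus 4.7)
The strategy is to verify, for each of the two parameter choices, the three sufficient conditions of Proposition~\ref{pro:stab}, exploiting the spectral factorizations $S = T^{-1}\Lambda_S T$ and $Q = Z \Lambda_Q Z^{-1}$ to reduce each matrix inequality to one involving diagonal quantities.

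For Statement~1, the first two conditions are immediate: $F_2 > 0$ is imposed by hypothesis, and $F_1 = T^* D T$ is Hermitian positive definite because $D$ is positive diagonal and $T$ is invertible. The core of the argument is to verify the stiffness-like inequality
\[
GL - F_2 S^2 - F_1 S = L^* L - F_2 S^2 - T^* D \Lambda_S T > 0.
\]
Using the identities $S T^{-1} = T^{-1}\Lambda_S$ and $S^2 T^{-1} = T^{-1}\Lambda_S^2$, one applies the congruence $T^{-*}(\cdot)\,T^{-1}$ to rewrite this as
\[
\tilde L^{*}\tilde L - \hat F_2 \Lambda_S^2 - D \Lambda_S > 0,
\]
where $\tilde L := L T^{-1}$ and $\hat F_2 := T^{-*} F_2 T^{-1} > 0$. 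Since $\Lambda_S$ is negative diagonal and $D$ positive diagonal, $-D\Lambda_S$ is a positive definite diagonal matrix. The prescribed upper bound $F_2 < -T^* D \Lambda_S^{-1} T$ transforms under the same congruence into $\hat F_2 < -D\Lambda_S^{-1}$, which combined with the positive-definite diagonal $\Lambda_S^2$ allows one to dominate $\hat F_2 \Lambda_S^2$ by $-D\Lambda_S$; adding the nonnegative term $\tilde L^{*}\tilde L$ then yields the desired inequality, and Proposition~\ref{pro:stab} delivers asymptotic stability of $\bm{\hat\Sigma}_G$.

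Statement~2 is handled by the dual argument applied to $\bm{\hat\Sigma}_H$. The parameter choices make $F_1 = Z D Z^*$ Hermitian positive definite and $F_2 > 0$ by construction, so the only nontrivial step is to verify
\[
R H_0 + Q R H_1 - Q^2 F_2 - Q F_1 = R R^* + Q R R^* Q - Q^2 F_2 - Q Z D Z^* > 0.
\]
Applying the congruence $Z^{-1}(\cdot) Z^{-*}$ and using $Z^{-1} Q = \Lambda_Q Z^{-1}$ reduces this to a diagonal inequality mirroring the one above, with $\Lambda_Q$ now playing the role of $\Lambda_S$. The bound $F_2 < -Z D \Lambda_Q^{-1} Z^*$ yields the counterpart $Z^{-1} F_2 Z^{-*} < -D \Lambda_Q^{-1}$ after congruence, and the conclusion follows in exactly the same way by invoking Proposition~\ref{pro:stab}.

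The main obstacle will be that $F_2$ does not commute with $S$ (respectively $Q$), so the products $F_2 S^2$ and $Q^2 F_2$ are in general non-Hermitian; the congruence transformation by the eigenvector matrix is precisely what makes the diagonal bound on $F_2$ effective for controlling the Hermitian part of the stiffness matrix. Once this bookkeeping step is carried out, both statements reduce to essentially identical diagonal computations.
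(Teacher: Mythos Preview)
Your approach is essentially the same as the paper's: invoke Proposition~\ref{pro:stab}, apply the congruence by the eigenvector matrix $T$ (resp.\ $Z$) to reduce the stiffness inequality to diagonal quantities, and use the prescribed bound on $F_2$ so that the $\Lambda_S$-terms cancel, leaving only the nonnegative term $(LT^{-1})^*(LT^{-1})$. The paper substitutes the $F_2$-bound before applying the congruence rather than after, but the content of the argument is identical.
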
 
\begin{proof}
Since $\Lambda_S<0$, the matrix
$P_S:=-D_S\Lambda_S^{-1}-E_S$ is diagonal and positive definite.
Thus, $F_1=T^\top D_S T$ and $F_2=T^\top E_S T$ are positive definite. Moreover,
\begin{align*}
	GL-F_2S^2-F_1S
	&=L^\top L-T^\top E_S\Lambda_S^2 T-T^\top D_S\Lambda_S T  \\
	&=L^\top L+T^\top P_S\Lambda_S^2 T>0.
\end{align*}
The strict inequality follows from $P_S\Lambda_S^2>0$ and $L^\top L\geq 0$. Hence, condition \eqref{eq:stabcond1} holds, and Proposition~4 implies the first statement.

For the second statement, define $P_Q:=-D_Q\Lambda_Q^{-1}-E_Q>0$. Then, $F_1=ZD_QZ^\top$ and $F_2=ZE_QZ^\top$ are positive definite. Furthermore,
\begin{align*}
	&RH_0+QRH_1-Q^2F_2-QF_1  \\
	&=RR^\top+QRR^\top Q^\top
	+Z\left(-E_Q\Lambda_Q^2-D_Q\Lambda_Q\right)Z^\top \\
	&=RR^\top+QRR^\top Q^\top+ZP_Q\Lambda_Q^2Z^\top>0.
\end{align*}
Therefore, condition \eqref{eq:stabcond2} holds, and the reduced system $\bm{\widehat{\Sigma}}_H$ is asymptotically stable.
\end{proof}

Next, a passivity-preserving model reduction for the second-order system $\bm{\Sigma}$ is discussed. It follows from e.g., \cite{hartmann2010BT2O,XiaodongTAC20172OROM} that  
the original system $\bm{\Sigma}$ is passive if 
\begin{equation} \label{eq:passM}	
M > 0, \ D > 0, \ K > 0
, \	
C_1 = B^\top, \ \text{and} \ C_0 = 0.
\end{equation}
Then, the following results hold.
\begin{pro} \label{pro:pass}
Consider the original second-order system $\bm{\Sigma}$, which satisfies the passivity condition in \eqref{eq:passM}. The second-order reduced system $\bm{\widehat{\Sigma}}_G$ is passive if $G^\top = H_1 = C_1 \Pi$, and $F_1, F_2>0$ satisfy
\begin{equation} \label{eq:passcond1}
	(\Pi^\top M \Pi - F_2) S^2 + (\Pi^\top D \Pi - F_1) S + \Pi^\top K \Pi > 0.
\end{equation}
Moreover, the second-order reduced system $\bm{\widehat{\Sigma}}_H$ is passive if $H_0 = 0$, $H_1 = B^\top \Upsilon^\top $, and $F_1, F_2>0$ satisfy 
$		Q^2(\Upsilon M \Upsilon^\top - F_2)   + Q (\Upsilon D \Upsilon^\top - F_1) + \Upsilon K \Upsilon^\top > 0.
$
\end{pro}
\begin{proof}
As the conditions $G^\top = H_1$, $F_1, F_2>0$ are given, to show the passivity of  $\bm{\widehat{\Sigma}}_G$, we only need the positive definiteness of $F_0$, namely
$GL - F_2 S^2 - F_1 S > 0$. By \eqref{eq:passcond1}, we have 
$
\Pi^\top BL - F_2 S^2 - F_1 S  > 0
$, which holds since $B = C_1$.
The proof for $\bm{\widehat{\Sigma}}_H$ follows similar reasoning.
\end{proof}

Based on the above results, the following result is yielded.
\begin{pro} \label{prop:passivity} 
Consider the second-order system $\bm{\Sigma}$ asymptotically stable and satisfying the passivity condition in \eqref{eq:passM}. The second-order reduced system $\bm{\widehat{\Sigma}}_G$ with parameters
\begin{align*}  
	F_2 &= \Pi^\top M \Pi, F_1 = \Pi^\top D \Pi, F_0 = \Pi^\top K \Pi, \nonumber \\
	G & = \Pi^\top B, H_1 = B^\top \Pi. 
\end{align*}
and reduced system $\bm{\widehat{\Sigma}}_H$ with parameters
\begin{align*}  
	F_2 &= \Upsilon M \Upsilon^\top, F_1 = \Upsilon D \Upsilon^\top, F_0 = \Upsilon K \Upsilon^\top, \nonumber \\
	G & = \Upsilon B, H_1 = B^\top \Upsilon^\top. 
\end{align*}
are asymptotically stable and passive.
\end{pro}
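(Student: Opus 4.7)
The plan is to verify that the two parameter choices make the reduced mass, damping, and stiffness matrices positive definite, which immediately yields both asymptotic stability (via Proposition~\ref{pro:stab}) and passivity (via Proposition~\ref{pro:pass}). The whole argument reduces to a consistency check using the Sylvester equations \eqref{eq:Sylv1}--\eqref{eq:Sylv2} together with the passivity hypothesis \eqref{eq:passM} on $\bm{\Sigma}$ (interpreted, consistently with Proposition~\ref{pro:pass}, as $M,D,K>0$, $C_0=0$, $C_1=B^*$).

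First, for $\bm{\hat{\Sigma}}_G$ I would check that the declared $F_0 = \Pi^* K \Pi$ coincides with the induced $F_0 = GL - F_2 S^2 - F_1 S$ from \eqref{sys:2orG}. Pre-multiplying the Sylvester equation \eqref{eq:Sylv1} by $\Pi^*$ yields $\Pi^* M \Pi S^2 + \Pi^* D \Pi S + \Pi^* K \Pi = \Pi^* BL$, and substituting $G = \Pi^* B$, $F_2 = \Pi^* M \Pi$, $F_1 = \Pi^* D \Pi$ rearranges this to exactly $F_0 = \Pi^* K \Pi$. Likewise, the induced output term $H_0 = C_0 \Pi + C_1 \Pi S - H_1 S$ in \eqref{sys:2orG} collapses to $0$ once $C_0=0$, $C_1=B^*$, and $H_1 = B^* \Pi$ are plugged in. A parallel calculation for $\bm{\hat{\Sigma}}_H$, this time post-multiplying \eqref{eq:Sylv2} by $\Upsilon^*$, yields $F_0 = \Upsilon K \Upsilon^*$ and forces $H_0 = 0$ (the latter is the unique choice compatible with the reduced-system passivity requirement and is implicit in the proposition).

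Second, I would establish that $\Pi$ has full column rank $\nu$ and $\Upsilon$ has full row rank $\nu$. This is the only non-routine step. It follows from the equivalent first-order Sylvester equations \eqref{eq:Sylv1o1}--\eqref{eq:Sylv2o1} used in the proof of Theorem~\ref{thm:Moments}: under $\sigma(S) \cap \Omega = \emptyset$ with $(L,S)$ observable, and respectively $\sigma(Q) \cap \Omega = \emptyset$ with $(Q,R)$ controllable, the classical Sylvester-rank result guarantees that $\tilde\Pi$ and $\tilde\Upsilon$ have rank $\nu$, and the block structures \eqref{eq:tildePi}--\eqref{eq:tildeUpsilon} force $\Pi$ and $\Upsilon$ to inherit this rank. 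Consequently the congruences $\Pi^* X \Pi$ and $\Upsilon X \Upsilon^*$ are positive definite for every positive definite $X \in \{M,D,K\}$.

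Third, stability and passivity are then immediate. For $\bm{\hat{\Sigma}}_G$ the LMI \eqref{eq:stabcond1} reads $GL - F_2 S^2 - F_1 S > 0$, which by the first step equals $\Pi^* K \Pi$, and is positive by the second step; Proposition~\ref{pro:stab} then delivers asymptotic stability, and the analogous argument settles \eqref{eq:stabcond2} for $\bm{\hat{\Sigma}}_H$. For passivity, the hypotheses of Proposition~\ref{pro:pass} are met: $G^* = H_1$ holds by the very definitions $G = \Pi^* B$, $H_1 = B^*\Pi$, and respectively $G = \Upsilon B$, $H_1 = B^*\Upsilon^*$, while the inequalities \eqref{eq:passcond1}--\eqref{eq:passcond2} collapse to $\Pi^* K \Pi > 0$ and $\Upsilon K \Upsilon^* > 0$, both already available. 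The main technical obstacle is the rank assertion for $\Pi$ and $\Upsilon$; beyond that, everything is a clean algebraic consequence of congruence with the positive definite mass, damping, and stiffness matrices of the original system.
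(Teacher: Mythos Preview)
Your argument is correct and follows precisely the route the paper intends: the proposition is stated immediately after Propositions~\ref{pro:stab} and~\ref{pro:pass} with the remark that it is ``based on'' those two results, and no explicit proof is given. Your verification that the Sylvester equations \eqref{eq:Sylv1}--\eqref{eq:Sylv2} force $GL-F_2S^2-F_1S=\Pi^*K\Pi$ (resp.\ $\Upsilon K\Upsilon^*$), together with the full-rank observation for $\Pi$ and $\Upsilon$, is exactly the computation the paper leaves to the reader; the rank step, which the paper tacitly assumes, is the one place where you supply more than the paper does.
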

\begin{rem}
Passivity preservation in Proposition~\ref{prop:passivity} is the direct result of Galerkin projections with full rank matrices $\Pi$ and $\Upsilon$, which preserve positive definiteness of the reduced matrices $F_2$, $F_1$, and $F_0$. Thereby, the reduced model retains a meaningful physical interpretation in terms of mass, damping, and stiffness.
\end{rem}
%
%



\section{Two-Sided Moment Matching}
\label{sec:twosided}
This section presents a two-sided second-order time-domain	moment matching approach to obtain a unique
$\nu$-order model matching both the moments of \eqref{sys:2o} at interpolation points in two distinct sets $\sigma(S)$ and $\sigma(Q)$, simultaneously.


Consider two signal generators as follows.
\begin{align} \label{geneSL}
\dot{\omega} = S \omega, \ \omega(0) \neq 0, \ \theta = L \omega, 	
\end{align} 
and  
\begin{align}\label{geneQR}
&\dot{\varpi} = Q \varpi + R \psi, \varpi(0) = 0, \nonumber \\ 
&d = \varpi + (Q\Upsilon M + \Upsilon D - RC_1) x + \Upsilon M \dot{x},
\end{align}
where $\omega, \varpi \in \mathbb{R}^\nu$. {\rev Following  \cite{scarciotti-CDC2015generators,padoan2017geometric}, we assume the minimality of the triple $(L, S, \omega(0))$ for  the signal generator  \eqref{geneSL}, which implies the observability of the pair $(L, S)$ and the excitability of the pair  $(S, \omega(0))$, or equivalently, the controllability of the system $\dot{\widetilde{\omega}} = S \widetilde{\omega} + \omega_0 u$. Furthermore, we require $(Q, R)$ controllable for the signal generator \eqref{geneQR}. }

With the above assumptions, let \eqref{geneSL}, \eqref{geneQR} and $\bm{\Sigma}$ in \eqref{sys:2o} be interconnected, with $u = \theta$ and $\psi = y$, illustrated in Fig.~\ref{fig:interconnection}. 
\begin{figure}[h]\centering
\includegraphics[scale=.31]{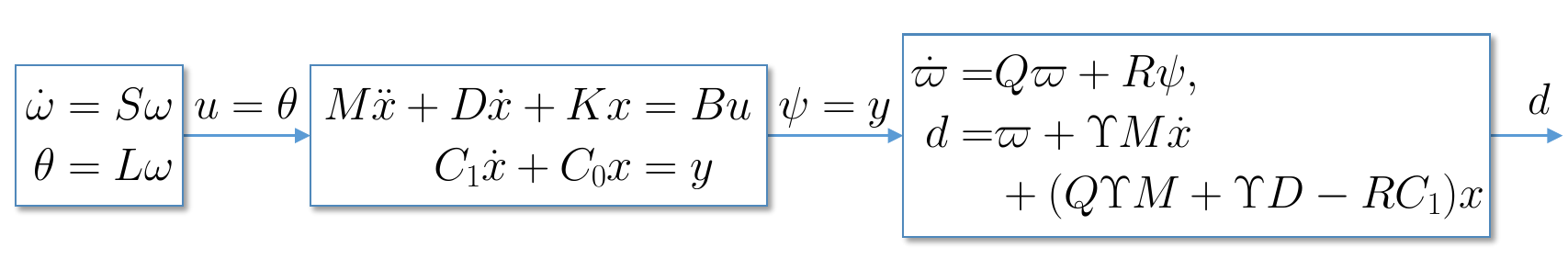}	
	%
\caption{Interconnection of $\bm{\Sigma}$ as in \eqref{sys:2o} with the signal generators \eqref{geneSL} and \eqref{geneQR}.}
\label{fig:interconnection}
\end{figure}

Following \cite{Astolfi2010MM,Ionescu2016TwoSided}, we show that the moments of system $\bm{\Sigma}$ at the interpolation points $\sigma(S)$ and $\sigma(Q)$
are characterized simultaneously by the steady-state response of signal $d(t)$. 
\begin{pro} \label{pro:interconnection}
{\rev Consider the signal generators \eqref{geneSL} and \eqref{geneQR}, where the triple $(L, S, \omega(0))$  is minimal, and the pair $(Q, R)$ is controllable.} Consider the two-sided interconnection of $\bm{\Sigma}$ with the signal generators, with $u = \theta$ and $\psi = y$. Then, on the manifold $\mathcal{M} = \{(x, \omega) \in \mathbb{R}^{n+\nu} \mid x = \Pi \omega\}$, it holds that
\begin{equation*}
	\dot{d} = Q d + \Upsilon BL \omega,
\end{equation*}
where $\Upsilon$ is the unique solution of equation \eqref{eq:Sylv2}.	
\end{pro} 
The proof of Proposition~\ref{pro:interconnection} can be found in Appendix~\ref{app:pro:interconnection}. 
With the above result, we are ready to determine the second-order reduced model of dimension $\nu$, that matches the moments of $\bm{\Sigma}$ at both $\sigma(S)$ and $\sigma(Q)$, respectively. Note that this model is within the families of second-order reduced models defined in \eqref{sys:2orG} and \eqref{sys:2orH} with a particular choice of $G$ and $H$, respectively.

\begin{thm} \label{thm:twoside}
Consider $\bm{\Sigma}$ as in \eqref{sys:2o} and let $S, Q \in \mathbb{R}^{\nu \times \nu}$ be such that $\sigma(S) \cap \Omega = \emptyset$ and $\sigma(Q) \cap \Omega = \emptyset$. Let
$L \in \mathbb{R}^{1 \times \nu}$, $R \in \mathbb{R}^{\nu}$ be such that the pair $(L, S)$ is observable and the pair $(Q,R)$ is controllable. Suppose $\Pi \in \mathbb{R}^{n \times \nu}$ and $\Upsilon \in \mathbb{R}^{\nu \times n}$ are the unique solutions of \eqref{eq:Sylv1} and \eqref{eq:Sylv2}, respectively, and $\Upsilon \Pi$ is nonsingular, and denote
\begin{equation} 
	\Pi^\dagger: = (\Upsilon \Pi)^{-1}\Upsilon, \ \text{and} \  \Upsilon^\dagger: = \Pi(\Upsilon \Pi)^{-1},
\end{equation}
the left pseudoinverse of $\Pi$ and the right pseudoinverse of $\Upsilon$, respectively. Let $\widehat{\Omega}$ be the set defined in \eqref{eq:hatOmega}, which satisfies $\sigma(S) \cap \widehat{\Omega}= \emptyset$ and $\sigma(Q) \cap \widehat{\Omega} = \emptyset$.
\begin{enumerate}
	\item The unique model $\bm{\widehat{\Sigma}}_G$ in \eqref{sys:2orG} that matches the moments of $\bm{\Sigma}$ at $\sigma(S)$ and $\sigma(Q)$ simultaneously is given by 
	\begin{equation} \label{eq:redMats}
		F_2  = \Pi^\dagger M \Pi, F_1 = \Pi^\dagger D \Pi,  
		G  = \Pi^\dagger B, H_1 = C_1 \Pi.
	\end{equation} 
	\item The unique model $\bm{\widehat{\Sigma}}_H$ in \eqref{sys:2orH} that matches the moments of $\bm{\Sigma}$ at $\sigma(S)$ and $\sigma(Q)$ simultaneously is given by
	\begin{equation}  \label{eq:redMats2}
		F_2 = \Upsilon M \Upsilon^\dagger, F_1 = \Upsilon D \Upsilon^\dagger,  
		H_1 = C_1 \Upsilon^\dagger, H_0 = C_0 \Upsilon^\dagger.
	\end{equation}
	\item The reduced models $\bm{\widehat{\Sigma}}_G$ and $\bm{\widehat{\Sigma}}_H$ are equivalent.
\end{enumerate}
\end{thm}
The proof of Theorem~\ref{thm:twoside} can be found in Appendix~\ref{app:thm:twoside}.

\subsection{Moment Matching With Pole-Zero Placement}
\label{sec:MM_polezero}
In this section, we extend the arguments in \cite{Datta2012poleplacement,Ionescu2021poleplacement} to consider the pole-zero placement problem in the reduced-order modeling of second-order systems.  

Specifically, we consider $\bm{\Sigma}$ in \eqref{sys:2o} and the family of approximations $\bm{\widehat{\Sigma}}_G$ as in \eqref{sys:2orG} that matches the moments of $\bm{\Sigma}$ at $\sigma(S)$ with $S \in \mathbb{R}^{\nu \times \nu}$. 

\paragraph*{Pole placement} For pole placement, the objective is to find the parameter matrices $F_1$, $F_2$, $G$, and $H_1$ such that $\bm{\widehat{\Sigma}}_G$ has the poles at prescribed locations $\lambda_1, \lambda_2, ..., \lambda_{m_\p}$, where $m_\p \leq \nu$, and $\lambda_i \notin \sigma(S) \cap \Omega$ with $\Omega$ defined in \eqref{Omega}.

Define $Q_\mathrm{p} \in \mathbb{R}^{m_\p \times m_\p}$ such that $\sigma(Q_{\p})  = \{\lambda_1, \lambda_2, ..., \lambda_{m_\p}\}$. Due to $\sigma(Q_{\p}) \cap \Omega = \emptyset$, the second-order Sylvester equation 
\begin{equation} \label{eq:Sylv_Qp}
Q_{\p}^2 \Upsilon_{\p} M   +   Q_{\p} \Upsilon_{\p} D + \Upsilon_{\p} K  = R_{\p} C_{{\p}0} + Q_{\p} R_{\p} C_{{\p}1}.
\end{equation}
has the unique solution $\Upsilon_{\p} \in \mathbb{R}^{m_\p \times n}$, where $R_{\p} \in \mathbb{R}^{m_\p}$ is any matrix such that the pair $(Q_{\p}, R_{\p})$ is controllable, and $C_{{\p}0}, C_{{\p}1} \in \mathbb{R}^{1 \times n}$ such that $C_{{\p}0} \Pi = C_{{\p}1} \Pi = 0$, i.e. $C_{{\p}0}^\top \in \ker (\Pi)$ and $C_{{\p}1}^\top \in \ker (\Pi)$ with $\Pi$ the unique solution of \eqref{eq:Sylv1}. 

Then, we impose linear constraints on the free parameters of the reduced model $\bm{\widehat{\Sigma}}_G$ such that the reduced
model $\bm{\widehat{\Sigma}}_G$ has poles at $\sigma(Q_\mathrm{p})$.
\begin{thm}\label{thm:pole}
Consider $\bm{\widehat{\Sigma}}_G$ in \eqref{sys:2orG} as a family of reduced models that
match the moments of the system \eqref{sys:2o} at $\sigma(S)$. Let $\Pi$ and
$\Upsilon_{\p} \in \mathbb{R}^{m_\p \times n}$ be the unique solutions of  \eqref{eq:Sylv1} and \eqref{eq:Sylv_Qp}, respectively. Assume that ${\rank}(\Upsilon_{\p} \Pi) = m_\p$. If the following constraints hold
\begin{subequations}\label{eq:constraints}
	\begin{align} 
		\Upsilon_{\p} \Pi F_2 & = \Upsilon_{\p} M \Pi,\\ \Upsilon_{\p} \Pi F_1 & = \Upsilon_{\p} D \Pi, \\ \Upsilon_{\p} \Pi G & = \Upsilon_{\p} B,
	\end{align}
\end{subequations}
then $\sigma(Q_{\p})  = \{\lambda_1, \lambda_2, ..., \lambda_{m_\p}\} \subseteq \widehat{\Omega}$ with $\widehat{\Omega}$ in \eqref{eq:hatOmega} the set of poles of the reduced model $\bm{\widehat{\Sigma}}_G$.
\end{thm}
The proof of Theorem~\ref{thm:pole} can be found in Appendix~\ref{app:thm:pole}. 

\begin{rem}
Theorem \ref{thm:pole} yields the sufficient conditions \eqref{eq:constraints} on the set $\G$ such that $m_\p\leq\nu$ of the poles of \eqref{sys:2orG} are fixed, when the pair $(L,S)$ is observable and the pair $(\Qp,\Rp)$ is controllable. Furthermore, if $m_\p=\nu$ and $\Up\Pi$ is assumed invertible, then $\what\Omega=\sigma(\Qp)$, if and only if 
\begin{align} \label{eq:redmod_pole}
	F_2  &= (\Up \Pi)^{-1}\Up M \Pi,\quad F_1 = (\Up \Pi)^{-1}\Up D \Pi, \nonumber \\ G  &= (\Up \Pi)^{-1}\Up B.
\end{align}
\end{rem}

\paragraph*{Zero placement}	{\rev Next, we discuss how to place zeros of the given second-order model in reduced-order models obtained through moment matching. First, the notion of \textit{zeros} for dynamical systems, as defined in \cite[Chapter 8]{astrom-murray-2008}, is extended to second-order systems. For a second-order system \eqref{sys:2o}, we determine the conditions such that for the input  $u(t)=u_0\e^{st}$ and the state evolution $x(t)=x_0\e^{st}$, with $u_0, x_0 \not =0$,  the resulting output satisfies $y(t)=0$, for all $t$. Substituting $u$ and $x$ in \eqref{sys:2o} yields
\begin{align*}
	\e^{st}(s^2Mx_0+sDx_0+Kx_0-Fu_0)&=0, \\
	\e^{st}(C_1s+C_0)&=0.
\end{align*}
%
Hence, $s=z$ is a \textit{zero} of the system in \eqref{sys:2o} if 
\begin{align}\label{eq_zeros_2o}
	\det\begin{bmatrix}
		z^2M+zD+K & -F\\ C_1z+C_0 & 0
	\end{bmatrix} =0.
\end{align}

For zero placement, the objective is to find the parameter matrices $F_1$, $F_2$, $G$, and $H_1$ such that $\bm{\widehat{\Sigma}}_G$ has zeros at prescribed locations $z_1, z_2, ..., z_{m_\z}$, where $m_\z < \nu$, and $z_i \notin \sigma(S) \cap \Omega$ with $\Omega$ defined in \eqref{Omega}. Applying the definition \eqref{eq_zeros_2o} to the family of reduced-order models $\bm{\widehat{\Sigma}}_G$ defined in \eqref{sys:2orG}, we obtain that $z_1, ..., z_{m_\z}$ are zeros of the system \eqref{sys:2orG} if the following equation holds for all $i = 1, 2, ..., m_\z$:
\begin{equation}\label{eq_assign_zeros}
	\det\!\left[\!\!\begin{array}{cc} F_2z_i^2+F_1z_i+(GL-F_2S^2-F_1S) &  -G \\ H_1z_i+(C_0\Pi+C_1\Pi S-H_1S) & 0\end{array}\!\!\right]\!=\!0,\end{equation}
%

Now let $\Qz\in\mathbb R^{m_\z\times m_\z}$ with $\sigma(\Qz)=\{z_1,\dots,z_{m_\z}\}$ and $\Rz\in\mathbb R^{m_\z}$ be any matrix such that the pair $(\Qz,\Rz)$ is controllable. Let $\Uz\in \mathbb R^{m_\z\times n}$ be the unique solution of the Sylvester equation
\begin{equation}\label{eq:Sylv_Uz}
	\Qz^2 \Uz M   +   \Qz\Uz D + \Uz K  = \Rz C_0 + \Qz \Rz C_1.
\end{equation}
with $\rank\Uz=m_\z$. The moments of $W(s)$ at $z_i$ are given by $\Uz B$. Assuming  $W(z_i)=0$, then $\Uz B=0$. The next result imposes linear constraints on the reduced-order models $\G$ such that they have $m_\z$ zeros at $\{z_1,\dots,z_{m_\z}\}$.
\begin{thm}\label{thm:zero}
	Consider $\G$, as in \eqref{sys:2orG}, a family of reduced systems of order $\nu$
	matching the moments of \eqref{sys:2o} at $\sigma(S)$. Consider the matrix $\Qz\in\mathbb R^{m_\z\times m_\z}$ with $\sigma(\Qz)=\{z_1,\dots, z_{m_\z}\}$, a symmetric set and let $\Rz \in \mathbb{R}^{m_\z}$ be such that the pair $(\Qz,\Rz)$ is controllable. Let $\Pi$ and
	$\Uz \in \mathbb{R}^{m_\z \times n}$ be the unique solutions of \eqref{eq:Sylv1} and \eqref{eq:Sylv_Uz}, respectively. Assume that $\mathrm{rank}(\Uz \Pi) = \ell$. If the following constraints hold
	\begin{subequations}\label{eq:constraints_z2}
		\begin{align} 
			H_1&=C_1\Pi,\label{eq_zeros_assign_H1} \\ \Uz \Pi F_2 & = -\Uz M \Pi,\label{eq_zeros_assign_F2}\\ \Uz \Pi F_1 & = -\Uz D \Pi, \label{eq_zeros_assign_F1}\\ \Uz \Pi G & = 0, \label{eq_zeros_assign_G}
		\end{align}
	\end{subequations}
	then $\{z_1,\dots, z_{m_\z}\}=\sigma(\Qz)$ are zeros of the system $\G$.
\end{thm}
The proof can be found in Appendix~\ref{app:thm:zero}.}

\paragraph*{Pole-zero placement} Let $\what{\bm\Sigma}_G$, as in \eqref{sys:2orG}, define a family of $\nu$ order models that match $\nu$ moments  of \eqref{sys:2o} at $\{s_1,\dots,s_{\nu}\},$ parameterized in the set of matrices $\{F_1,F_2,G,H_1\}$ of appropriate dimensions. Let $\{\lambda_1,\dots,\lambda_{m_\p}\}$ and $\{z_1,\dots,z_{m_\z}\}$ be symmetric sets (including multiplicities), such that $\{\lambda_1,\dots,\lambda_{m_\p}\}\cap\Omega=\emptyset$, $\{\lambda_1,\dots,\lambda_{m_\p}\}\cap\sigma(S)=\emptyset$ and $\{z_1,\dots,z_{m_\z}\}\cap\sigma(S)=\emptyset$, ${m_\p}+{m_\z}\leq\nu$. 
We now collect the constraints \eqref{eq:constraints} and \eqref{eq:constraints_z2}, yielding the system of matrix equations in the unknowns $F_1,F_2,G,H_1$.
\begin{cor}\label{cor:allconstraints2}
Let $\what{\bm\Sigma}_G$, as in \eqref{sys:2orG}, define a family of $\nu$ order models that match $\nu$ moments  of \eqref{sys:2o} at $\{s_1,\dots,s_{\nu}\}$. Let $\Pi$ be the solution of the matrix equation \eqref{eq:Sylv1}, $\Up$ be the solution of the matrix equation \eqref{eq:Sylv_Qp}, and $\Uz$ be the solution of the matrix equation \eqref{eq:Sylv_Uz}. Denote $\Upsilon=\begin{bmatrix}\Up^\top &\Uz^\top \end{bmatrix}^\top\in\mathbb R^{\nu\times n}$. 
Assuming \eqref{eq_zeros_assign_H1} holds and if 
\begin{subequations}\label{eq:allconstraints2}
	\begin{align}
		\Upsilon \Pi F_2 &=\begin{bmatrix}
			(\Up M\Pi)^\top & -(\Uz M\Pi)^\top
		\end{bmatrix}^\top, \\
		\Upsilon \Pi F_1 &=\begin{bmatrix}
			(\Up D\Pi)^\top & -(\Uz D\Pi)^\top
		\end{bmatrix}^\top, \\
		\Upsilon \Pi G &=\begin{bmatrix}
			(\Up B)^\top & 0
		\end{bmatrix}^\top,
	\end{align}
\end{subequations}
then $\{\lambda_1,\dots,\lambda_{m_\p}\}$ are poles and  $\{z_1,\dots,z_{m_\z}\}$ are zeros of $\widehat{\bm\Sigma}_G$ as in \eqref{sys:2orG}, respectively.
\end{cor}
\begin{proof}
The proof follows using arguments from Theorem~\ref{thm:pole} and Theorem~\ref{thm:zero}. Hence it is omitted.
\end{proof}



Note that if ${m_\p}+{m_\z}<\nu$, then the sufficient conditions expressed through the linear systems \eqref{eq:allconstraints2} have an infinite number of solutions, respectively. Then, there exist solutions $F_2, F_1, G \in \G$ such that additional constraints can be imposed (e.g., diagonal mass matrix $F_2$ and/or stiffness matrix $F_0$ symmetric). 
Furthermore, if ${m_\p}+{m_\z}=\nu$ and $\Upsilon\Pi$ is invertible, then 
\begin{subequations}\label{eq:allconstraints2_lknu}
\begin{align}
	F_2 & \!=\!( \Upsilon \Pi)^{-1}\!\!\begin{bmatrix}
		\Up M\Pi \\ -\Uz M\Pi
	\end{bmatrix}, \
	F_1\! =\!( \Upsilon \Pi)^{-1}\!\!\begin{bmatrix}
		\Up D\Pi \\ -\Uz D\Pi
	\end{bmatrix}, \label{eq:allconstraints2_lknuF}\\
	G &= ( \Upsilon \Pi)^{-1}\begin{bmatrix}
		\Up B \\ 0 \label{eq:allconstraints2_lknuG}
	\end{bmatrix},
\end{align}
\end{subequations}

provides the unique model \eqref{sys:2orG} having the poles $\lambda_i, i=1:{m_\p}$ and the zeros $z_j, j=1:{m_\z}$. 

The constructive steps for computing a reduced model satisfying the pole-zero constraints are summarized in Algorithm~\ref{alg_PZD_G}.
\begin{algorithm}[t]
\caption{Second-order time-domain moment matching with prescribed poles and zeros}\label{alg_PZD_G}
\begin{algorithmic}
	\STATE 
	\STATE {\textsc{GIVEN DATA:}}
	\STATE \hspace{0.5cm} \textbf{A second-order system \eqref{sys:2o}};
	\STATE \hspace{0.5cm} $\nu < n\in\mathbb N$, chosen;
	\STATE \hspace{0.5cm} interpolation points $\{s_i\in\mathbb C\setminus \Omega \ |\ i=1:\nu\}$; 
	\STATE \hspace{0.5cm} poles $\{\lambda_1,\dots,\lambda_{m_\p}\}\subset\mathbb{C},\ s_j\neq\lambda_j, j=1:m_\p$;
	\STATE \hspace{0.5cm} zeros $\{z_1,\dots,z_{m_\z}\}\subset\mathbb{C}$, $m_\p+m_\z\leq\nu$;
	\STATE {\textsc{COMPUTE:}}
	\STATE \hspace{0.5cm} $S\in\mathbb R^{\nu\times\nu}$, such that $\sigma(S)=\{s_i\ |\ i=1:\nu\}$;
	\STATE \hspace{0.5cm} $L\in\mathbb R^{1\times\nu}$ such that $(L,S)$ is observable;
	\STATE \hspace{0.5cm} $\Pi\in\mathbb R^{n\times\nu}$, the solution of \eqref{eq:Sylv1};
	\STATE \hspace{0.5cm} $\Up$ and $\Uz$, the solutions of \eqref{eq:Sylv_Qp} and \eqref{eq:Sylv_Uz};
	\STATE {\textsc{SOLUTION:}}
	\STATE \hspace{0.5cm} $F_1, F_2, G$, as in \eqref{eq:allconstraints2_lknu};
	\STATE \hspace{0.5cm} Substitute $F_1, F_2, G$ and $H_1=C_1\Pi$, into $\what{\bm\Sigma}_G$ in \eqref{sys:2orG}.
\end{algorithmic}
\label{alg1}
\end{algorithm}
\subsection{Moment Matching of First-Order Derivatives}
\label{sec:firstorder}


\rev In the context of moment matching-based model reduction, matching the first-order derivatives of the transfer functions at specified interpolation points is also an important question. From the theoretical perspective, matching first-order derivatives at the same interpolation points is necessary for the first-order necessary optimality conditions associated with the minimal ${H}_2$ norm error approximation problem \cite{gugercin-antoulas-beattie-SIAM2008}. Furthermore, it has often been observed from numerical simulations that matching the first-order derivatives at the interpolation points leads to a notably smaller reduction error in the  ${H}_2$ norm. Therefore, in this section, we study the moment matching of first derivatives of second-order transfer functions. Specifically, we focus on the reduced second-order systems   that match the moments of both zero and first-order derivatives of the transfer function
$ 
W(s) = C (Ms^2 + Ds + K)^{-1}B,
$ 
where $C = C_0$ and $C_1 = 0$ in the original system \eqref{sys:2o}. 

Denote 
\begin{subequations} \label{eq:WLWR}
\begin{align}
	W_L(s): &= -C(Ms^2 + Ds + K)^{-1}, \label{eq:WL}\\
	W_R(s): &= (2Ms +D)(Ms^2 + Ds + K)^{-1}B.
	\label{eq:WR}
\end{align}
\end{subequations} 
Then, the first-order derivative of $W(s)$ is 
$
W^\prime (s) =W_L(s) \cdot W_R(s),
$
which has a state-space representation as
\begin{equation} \label{sysdif}
\bm{\Sigma}^\prime: \
\begin{cases}
	M \ddot{x}(t) + D \dot{x}(t) + K x(t) & = B u(t), \\
	M \ddot{z}(t) + D \dot{z}(t) + K z(t) &= 2M \dot{x}(t) + D x(t), \\
	- Cz(t) & =  y(t),
\end{cases}
\end{equation} 
with $z(t) \in \mathbb{R}^n$ and $y(t) \in \mathbb{R}$. 

Consider the following signal generator   
\begin{align}\label{geneSL1}
&\dot{\varpi} = S \varpi + L^\top \psi, \varpi(0) = 0, \nonumber \\ 
&d = \varpi + (S\Upsilon M + \Upsilon D) z + \Upsilon M \dot{z},
\end{align}
where $\Upsilon \in \mathbb{R}^{\nu \times n}$ is the unique solution of the second-order Sylvester equation:
\begin{equation} \label{eq:Sylv12}
S^2 \Upsilon M + S \Upsilon D + \Upsilon K = - L^\top C,
\end{equation}
since $\sigma(S) \cap \Omega = \emptyset$ is assumed.
We then connect the system $\bm{\Sigma}^\prime$ with the signal generators \eqref{geneSL} and \eqref{geneSL1}, where $u = \theta$ and $\psi = y$, see Fig.~\ref{fig:interconnection1}. The following result is obtained with the property of the signal $d(t)$ in \eqref{geneSL1}.
\begin{figure}[h]\centering
\includegraphics[scale=.31]{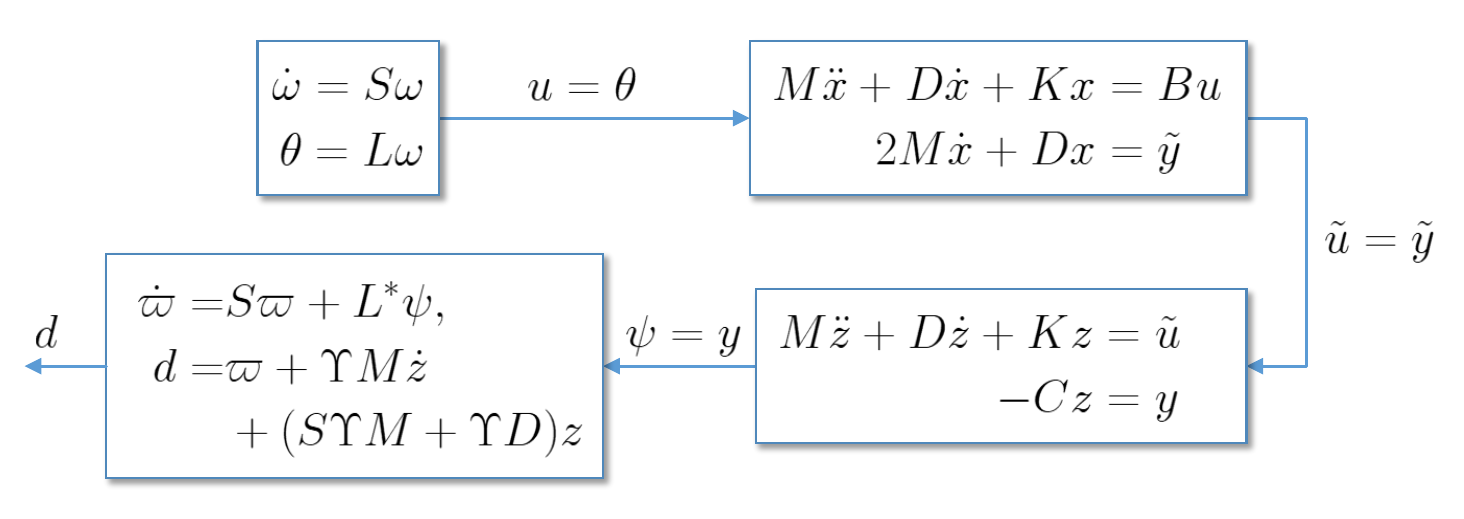}	
\caption{Illustration of the interconnection of $\bm{\Sigma}^\prime$ with the signal generators \eqref{geneSL} and \eqref{geneSL1}.}
\label{fig:interconnection1}
\end{figure}
\begin{thm} \label{thm:one2one}
Consider the system $\bm{\Sigma}^\prime$ in \eqref{sysdif}, which is connected to the signal generators \eqref{geneSL} and \eqref{geneSL1} with $u = \theta$ and $\psi = y$. Let $\Pi$ and
$\Upsilon$ be the unique solutions of \eqref{eq:Sylv1} and \eqref{eq:Sylv12}, respectively, and assume that $\Omega \subset \mathbb{C}^-$ and $\sigma(S) \subset \mathbb{C}^0$. Then the moments of $\bm{\Sigma}^\prime$ at $\sigma(S)$  
are in a one-to-one relation with the steady-state response of the signal $d(t)$ in \eqref{geneSL1}.
\end{thm}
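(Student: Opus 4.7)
The plan is to mimic the strategy of the proof of Proposition 4 for the two-sided case, replacing the role of $(Q,R)$ by $(S,L^*)$ and contending with the fact that the downstream signal generator \eqref{geneSL1} now has the same spectrum $\sigma(S)$ as the upstream one \eqref{geneSL}. The steady state is therefore to be understood through the coefficients of a forced linear ODE on the manifold rather than as a pointwise bounded limit.

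First I would characterise the steady-state manifold of the cascade $\bm\Sigma^\prime$ driven by \eqref{geneSL}. The $x$-subsystem has exactly the structure of $\bm\Sigma$ with input $u=L\omega$, so by Theorem~\ref{thm:Moments} its invariant manifold is $x=\Pi\omega$ with $\Pi$ the unique solution of \eqref{eq:Sylv1}. On this manifold, the $z$-equation is forced by $2M\dot x+Dx=(2M\Pi S+D\Pi)\omega$, and since $\sigma(S)\cap\Omega=\emptyset$, there is a unique $\bar\Pi\in\mathbb{C}^{n\times\nu}$ solving
\begin{equation*}
M\bar\Pi S^2+D\bar\Pi S+K\bar\Pi=2M\Pi S+D\Pi,
\end{equation*}
with $z=\bar\Pi\omega$ on the manifold. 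The output is $\psi=-C\bar\Pi\omega$, and an application of Theorem~\ref{thm:Moments} to the composite first-order representation of the cascade identifies $-C\bar\Pi$ as being in one-to-one relation with the moments of $W^\prime(s)$ at $\sigma(S)$.

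Second, I would compute $\dot d$ on this manifold. Differentiating $d=\varpi+(S\Upsilon M+\Upsilon D)z+\Upsilon M\dot z$ and substituting $\dot\varpi=S\varpi+L^*\psi=S\varpi-L^*C\bar\Pi\omega$, $\dot z=\bar\Pi S\omega$, and $\ddot z=\bar\Pi S^2\omega$, then subtracting $Sd$, gives
\begin{equation*}
\dot d-Sd=\bigl(-L^*C\bar\Pi+\Upsilon D\bar\Pi S+\Upsilon M\bar\Pi S^2-S^2\Upsilon M\bar\Pi-S\Upsilon D\bar\Pi\bigr)\omega.
\end{equation*}
Using \eqref{eq:Sylv12} in the form $-L^*C\bar\Pi=S^2\Upsilon M\bar\Pi+S\Upsilon D\bar\Pi+\Upsilon K\bar\Pi$, the four commutator-like terms cancel pairwise and the right-hand side collapses to $\Upsilon(M\bar\Pi S^2+D\bar\Pi S+K\bar\Pi)\omega$. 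Pre-multiplying the $\bar\Pi$-Sylvester equation by $\Upsilon$ then reduces this to $(2\Upsilon M\Pi S+\Upsilon D\Pi)\omega$, so that
\begin{equation*}
\dot d=Sd+(2\Upsilon M\Pi S+\Upsilon D\Pi)\omega.
\end{equation*}

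Finally, I would justify the one-to-one identification. Observability of $(L,S)$ together with $\sigma(S)\cap\Omega=\emptyset$ force $\Pi,\bar\Pi,\Upsilon$ to be the unique solutions of their respective second-order Sylvester equations, so the forcing coefficient $T:=2\Upsilon M\Pi S+\Upsilon D\Pi=\Upsilon(M\bar\Pi S^2+D\bar\Pi S+K\bar\Pi)$ is in one-to-one correspondence with the moment matrix $-C\bar\Pi$ once $L,S$ are fixed. The assumption $\Omega\subset\mathbb{C}^-$ ensures that the off-manifold transients of $\bm\Sigma^\prime$ decay exponentially, so the forced linear dynamics $\dot d=Sd+T\omega$ describe the steady-state behaviour of $d(t)$; the assumption $\sigma(S)\subset\mathbb{C}^0$ keeps $\omega(t)$ persistent, so $T$ is observable in the asymptotic profile of $d(t)$. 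I expect the main obstacle to be the algebraic reduction in the second paragraph: the commutator-type terms $\Upsilon M\bar\Pi S^2-S^2\Upsilon M\bar\Pi$ and $\Upsilon D\bar\Pi S-S\Upsilon D\bar\Pi$ must be absorbed through the two Sylvester identities in the right order, and this step is slightly more delicate than its counterpart in Proposition 4 because the $d$-dynamics and the $\omega$-dynamics share the same spectrum, which precludes the usual disjoint-spectrum Sylvester uniqueness argument used there.
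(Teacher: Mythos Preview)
Your derivation of the on-manifold dynamics $\dot d=Sd+(2\Upsilon M\Pi S+\Upsilon D\Pi)\omega$ is correct and coincides with the relation the paper establishes. The route differs in two places. First, the paper does not introduce your auxiliary $\bar\Pi$: it substitutes $M\ddot z=2M\dot x+Dx-D\dot z-Kz$ directly into $\dot d$, uses \eqref{eq:Sylv12} to eliminate $\Upsilon K$, and obtains the \emph{exact} identity $\dot d=Sd+\Upsilon(2M\dot x+Dx)$ valid for every trajectory of the interconnection, not only on the $z$-manifold; only afterwards does it insert the decomposition $x=\Pi\omega+(\text{decaying transient})$. Your $\bar\Pi$-route reaches the same endpoint, but the exact identity is cleaner because it isolates the transient in $x$ alone and makes explicit that no manifold hypothesis on $z$ is required.

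Second --- and this is the substantive difference --- the paper does not argue the final one-to-one step abstractly. It passes to Laplace transforms, writes $\mathscr D(s)=(sI-S)^{-1}\Upsilon(2Ms+D)\Pi(sI-S)^{-1}\omega(0)+\cdots$, and then uses the two Sylvester identities \eqref{eq:Sylv1} and \eqref{eq:Sylv12} to factor $(sI-S)^{-1}\Upsilon\mathscr F(s)^{-1}$ and $\mathscr F(s)^{-1}\Pi(sI-S)^{-1}$ explicitly, so that the double-pole part of $\mathscr D(s)$ is exhibited as $-L^*C\mathscr F(s)(2Ms+D)\mathscr F(s)BL=L^*W'(s)L$ near each $s_i\in\sigma(S)$. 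That directly displays the moments of $\bm\Sigma'$ inside the steady-state signal. Your sentence that $T$ is ``in one-to-one correspondence with $-C\bar\Pi$ once $L,S$ are fixed'' is precisely the step that still needs this work: as written it only says both objects are uniquely determined by the data, not that one determines the other. To close the gap without Laplace transforms you would still have to argue, e.g.\ in a diagonalising basis for $S$, that the resonant coefficients in the solution of $\dot d=Sd+T\omega$ are $T_{ii}=l_i^*W'(s_i)l_i$, which is exactly what the paper's frequency-domain computation delivers in one stroke.
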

The proof of Theorem~\ref{thm:one2one} is shown in Appendix~\ref{app:thm:one2one}. 

Next, we present a second-order reduced model that matches the moments of $W(s)$ and $W^\prime(s)$ simultaneously at the interpolation points $\sigma(S)$. We therefore suppose $H_1 = 0$ and $H_0 = H$ in \eqref{sys:2orG} and  \eqref{sys:2orH}.
\begin{thm} \label{thm:equivalence}
Consider a linear second-order system $\bm{\Sigma}$ in \eqref{sys:2o} and let $S  \in \mathbb{R}^{\nu \times \nu}$, $L  \in \mathbb{R}^{1 \times \nu}$ be such that the pair $(L, S)$ is observable, and $\Pi \in \mathbb{R}^{n \times \nu}$ and $\Upsilon \in \mathbb{R}^{\nu \times n}$ are the unique solutions of \eqref{eq:Sylv1} and \eqref{eq:Sylv12}, respectively, such that $\Upsilon \Pi$ is nonsingular. Then the following statements hold.
\begin{enumerate}
	\item A model $\bm{\widehat \Sigma}_G$  that matches the moments of $W(s)$ and $W^\prime(s)$ 
	at $\sigma(S)$ is given by 
	\begin{equation} \label{eq:RedMats}
		F_2  = \Pi^\dagger M \Pi, F_1 = \Pi^\dagger D \Pi,  
		G   = \Pi^\dagger B,
	\end{equation} 
	with $
	\Pi^\dagger: = (\Upsilon \Pi)^{-1}\Upsilon.
	$
	
	\item A model $\bm{\widehat{\Sigma}}_H$  that matches  the moments of $W(s)$ and $W^\prime(s)$ 
	at $\sigma(S)$ is given by
	\begin{equation*}  
		F_2 = \Upsilon M \Upsilon^\dagger, F_1 = \Upsilon D \Upsilon^\dagger,  
		H = C \Upsilon^\dagger,
	\end{equation*}
	with $\Upsilon^\dagger: = \Pi(\Upsilon \Pi)^{-1}
	$.
	
	\item The reduced models $\bm{\widehat{\Sigma}}_G$ and $\bm{\widehat{\Sigma}}_H$ are equivalent.	
\end{enumerate}
\end{thm}
The proof of Theorem~\ref{thm:equivalence} is shown in Appendix~\ref{app:thm:equivalence}. 

\section{Towards Moment Matching for MIMO Second-Order Systems}
\label{sec:mimo_extension}

The previous sections develop a complete second-order time-domain moment matching framework for SISO systems, including two-sided matching, pole-zero placement, and first-order derivative matching. For MIMO systems, the basic second-order Sylvester mechanism still carries over, but the interpolation data must be formulated tangentially. In this section, we provide the MIMO results that follow naturally from the SISO framework and identify the additional difficulties that prevent a direct extension of the derivative-matching and pole-zero placement results.

Consider the second-order system \eqref{sys:2o} with $p$ inputs and $q$ outputs, i.e.,
$
W(s)=(C_1s+C_0)(Ms^2+Ds+K)^{-1}B\in\mathbb{C}^{q\times p}.
$
In contrast with the SISO case, an interpolation point alone does not determine a scalar moment. One must also specify input and output directions.

\begin{defn}[Tangential MIMO moments]
\label{def:mimo_tangential_moments}
Let $s_\star\in\mathbb{C}\setminus\Omega$. For a right direction $\ell\in\mathbb{C}^{p}$ and a left direction $\rho\in\mathbb{C}^{q}$, the $k$th right and left tangential moments are defined as
\begin{align*}
	\eta_k^{\rm r}(s_\star,\ell)
	&=\frac{(-1)^k}{k!}
	\left[\frac{\opd^k W(s)}{\opd s^k}\right]_{s=s_\star}\ell,\\
	\eta_k^{\rm l}(s_\star,\rho)
	&=\frac{(-1)^k}{k!}
	\rho^\ast\left[\frac{\opd^k W(s)}{\opd s^k}\right]_{s=s_\star}.
\end{align*}
\end{defn}

Consider the following interpolation matrices
\[
S,Q\in\mathbb{C}^{\nu\times\nu},\qquad
L\in\mathbb{C}^{p\times\nu},\qquad
R\in\mathbb{C}^{\nu\times q},
\]
where $(L,S)$ is observable and $(Q,R)$ is controllable. The right and left second-order Sylvester equations are
\begin{subequations}\label{eq:mimo_sylvester}
\begin{align}
	M\Pi S^2+D\Pi S+K\Pi&=BL, \label{eq:mimo_right_sylv}\\
	Q^2\Upsilon M+Q\Upsilon D+\Upsilon K&=RC_0+QRC_1.
	\label{eq:mimo_left_sylv}
\end{align}
\end{subequations}

\begin{lem}
\label{lem:mimo_tangential_identities}
Assume that $S$ and $Q$ are diagonalizable and that
$\sigma(S)\cap\Omega=\emptyset$ and $\sigma(Q)\cap\Omega=\emptyset$.
Let $\Pi$ and $\Upsilon$ solve \eqref{eq:mimo_right_sylv} and
\eqref{eq:mimo_left_sylv}, respectively. If $Sv_i=s_iv_i$ and
$w_j^\ast Q=\mu_jw_j^\ast$, define
$
\ell_i:=Lv_i$, and $\rho_j^\ast:=w_j^\ast R.
$
Then
\begin{align}
	W(s_i)\ell_i&=(C_0\Pi+C_1\Pi S)v_i,
	\label{eq:mimo_right_tangential_identity}\\
	\rho_j^\ast W(\mu_j)&=w_j^\ast\Upsilon B.
	\label{eq:mimo_left_tangential_identity}
\end{align}
\end{lem}
\begin{proof}
Let $P(s):=Ms^2+Ds+K$. Multiplying \eqref{eq:mimo_right_sylv} by $v_i$ gives
$
P(s_i)\Pi v_i=B\ell_i.
$
Since $s_i\notin\Omega$, the matrix $P(s_i)$ is nonsingular, and therefore
$
\Pi v_i=P(s_i)^{-1}B\ell_i.
$
Premultiplying by $C_0+s_iC_1$ yields \eqref{eq:mimo_right_tangential_identity}. Similarly, premultiplying \eqref{eq:mimo_left_sylv} by $w_j^\ast$ gives
$
w_j^\ast\Upsilon P(\mu_j)=\rho_j^\ast(C_0+\mu_jC_1).
$
Since $\mu_j\notin\Omega$, right multiplication by $P(\mu_j)^{-1}B$ yields \eqref{eq:mimo_left_tangential_identity}.
\end{proof}

\begin{rem}[Real realizations for conjugate data]
\label{rem:mimo_realification}
If the complex tangential data are closed under conjugation, then the equations above can be written with real matrices. Indeed, each pair $a\pm\mathrm{i}b$ can be represented by the real block
$
\begin{bmatrix}
	a & b\\
	-b & a
\end{bmatrix},
$
and the corresponding direction columns are transformed by the same similarity transformation. The Sylvester equations are invariant under this change of basis. Consequently, the transformed matrices $S,L,Q,R$, the solutions $\Pi,\Upsilon$, and the reduced models constructed below can all be chosen real while preserving the same tangential interpolation conditions.
\end{rem}
Following a similar reasoning in the proof of Theorem~\ref{thm:Moments}, we can derive the following result.
\begin{thm}
\label{thm:mimo_moments}
Assume that $M$ is nonsingular. If
$\sigma(S)\cap\Omega=\emptyset$ and $\sigma(Q)\cap\Omega=\emptyset$, then
\eqref{eq:mimo_right_sylv} and \eqref{eq:mimo_left_sylv} have unique
solutions. Moreover, these solutions characterize the right and left
zeroth-order tangential moments through
\eqref{eq:mimo_right_tangential_identity} and
\eqref{eq:mimo_left_tangential_identity}.
\end{thm}


\subsection{Tangential Moment Matching and Second-Order Approximation}
\label{subsec:mimo_tangential_matching}

We now use the tangential moment characterization above to construct
second-order reduced models for MIMO systems. The main idea is similar to
the SISO case: the Sylvester solutions associated with the original system
encode the target moments, while the corresponding Sylvester solutions
associated with the reduced system encode the moments of the approximation.
Moment matching is therefore obtained by equating these two encoded
quantities.

Consider a MIMO reduced second-order model
\begin{equation}\label{sys:mimo_reduced}
\bm{\widehat\Sigma}:\quad
\begin{cases}
	F_2\ddot\xi+F_1\dot\xi+F_0\xi=Gu,\\
	H_1\dot\xi+H_0\xi=\psi,
\end{cases}
\end{equation}
where $\xi\in\mathbb{R}^{\nu}$, $u\in\mathbb{R}^{p}$, $\psi\in\mathbb{R}^{q}$,
$F_i\in\mathbb{R}^{\nu\times\nu}$, $G\in\mathbb{R}^{\nu\times p}$, and
$H_0,H_1\in\mathbb{R}^{q\times\nu}$. Its transfer matrix is
\[
\widehat W(s)=(H_1s+H_0)(F_2s^2+F_1s+F_0)^{-1}G.
\]
Let
$
\widehat\Omega:=\{s\in\mathbb{C}:\det(F_2s^2+F_1s+F_0)=0\}.
$
The following result gives the
necessary and sufficient algebraic conditions under which
\eqref{sys:mimo_reduced} matches the prescribed right or left tangential
moments.
\begin{pro}
\label{pro:mimo_one_sided}
Assume that the reduced pencil is regular.
\begin{enumerate}
	\item If $\sigma(S)\cap\widehat\Omega=\emptyset$, then
	$\bm{\widehat\Sigma}$ matches the right tangential moments encoded by
	$(S,L)$ if and only if
	\begin{align}
		C_0\Pi+C_1\Pi S&=H_0P_{\rm r}+H_1P_{\rm r}S,
		\label{eq:mimo_right_output_match}\\
		F_2P_{\rm r}S^2+F_1P_{\rm r}S+F_0P_{\rm r}&=GL,
		\label{eq:mimo_right_reduced_sylv}
	\end{align}
	where $P_{\rm r}$ is the unique solution of
	\eqref{eq:mimo_right_reduced_sylv}.
	
	\item If $\sigma(Q)\cap\widehat\Omega=\emptyset$, then
	$\bm{\widehat\Sigma}$ matches the left tangential moments encoded by
	$(Q,R)$ if and only if
	\begin{align}
		\Upsilon B&=P_{\rm l}G,
		\label{eq:mimo_left_input_match}\\
		Q^2P_{\rm l}F_2+QP_{\rm l}F_1+P_{\rm l}F_0&=RH_0+QRH_1,
		\label{eq:mimo_left_reduced_sylv}
	\end{align}
	where $P_{\rm l}$ is the unique solution of
	\eqref{eq:mimo_left_reduced_sylv}.
\end{enumerate}
\end{pro}

\begin{proof}
The proof follows by applying Theorem~\ref{thm:mimo_moments} first to the
original system and then to the reduced system. For the right tangential
data, the matrix $C_0\Pi+C_1\Pi S$ encodes the moments of $W(s)$ associated
with $(S,L)$. Since $\sigma(S)\cap\widehat\Omega=\emptyset$, the reduced
Sylvester equation \eqref{eq:mimo_right_reduced_sylv} has a unique solution
$P_{\rm r}$, and the matrix $H_0P_{\rm r}+H_1P_{\rm r}S$ encodes the
corresponding right tangential moments of $\widehat W(s)$. Hence equality
in \eqref{eq:mimo_right_output_match} is equivalent to right tangential
moment matching. The left statement is obtained in the same way from the
left Sylvester equation \eqref{eq:mimo_left_reduced_sylv}: the matrices
$\Upsilon B$ and $P_{\rm l}G$ encode, respectively, the left tangential
moments of $W(s)$ and $\widehat W(s)$.
\end{proof}

A convenient right-interpolating family is obtained by fixing the reduced Sylvester
solution to be $P_{\rm r}=I_\nu$. Suppose $P_{\rm r}=I_\nu$, we obtain the right-interpolating family
\begin{equation}\label{sys:mimo_right_family}
\bm{\widehat\Sigma}_{G}^{\rm MIMO}: \
\begin{cases}
	F_2\ddot\xi+F_1\dot\xi+(GL-F_2S^2-F_1S)\xi=Gu,\\
	H_1\dot\xi+(C_0\Pi+C_1\Pi S-H_1S)\xi=\psi,
\end{cases}
\end{equation}
parameterized by $F_1,F_2,G,H_1$. Choosing $P_{\rm l}=I_\nu$ then gives the
left-interpolating family
{\small \begin{equation}\label{sys:mimo_left_family}
	\bm{\widehat\Sigma}_{H}^{\rm MIMO}: \
	\begin{cases}
		F_2\ddot\xi+F_1\dot\xi+(RH_0+QRH_1-Q^2F_2-QF_1)\xi
		\\
		\hspace*{5.8cm}=\Upsilon Bu,
		\\
		H_1\dot\xi+H_0\xi=\psi,
	\end{cases}
	\end{equation}}
	parameterized by $F_1,F_2,H_0,H_1$.
	
	\subsection{Two-Sided Tangential Matching}
	\label{subsec:mimo_two_sided}
	
	The one-sided families in \eqref{sys:mimo_right_family} and
	\eqref{sys:mimo_left_family} match either the right or the left tangential
	data. To match both sets of data with a single reduced model, the right
	interpolation subspace generated by $\Pi$ and the left interpolation subspace
	generated by $\Upsilon$ must be compatible. This compatibility is expressed
	by the nonsingularity of
	\[
	T:=\Upsilon\Pi .
	\]
	When this condition holds, the two Sylvester solutions define a
	second-order Petrov-Galerkin projection.
	Define $(A_2,A_1,A_0):=(M,D,K)$ and set
	\begin{align}
F_i^{\rm r}&=T^{-1}\Upsilon A_i\Pi,\quad i=0,1,2,
\nonumber\\
G^{\rm r}&=T^{-1}\Upsilon B, \quad
H_j^{\rm r}=C_j\Pi,\quad j=0,1.
\label{eq:mimo_projected_right}
\end{align}
The superscript ``${\rm r}$'' indicates the realization in which the right
interpolation conditions are enforced directly through the choice of the
reduced coordinates. 	Premultiplying \eqref{eq:mimo_right_sylv} by $T^{-1}\Upsilon$ gives
\[
F_2^{\rm r}S^2+F_1^{\rm r}S+F_0^{\rm r}=G^{\rm r}L.
\]
Moreover, $H_0^{\rm r}+H_1^{\rm r}S=C_0\Pi+C_1\Pi S$. Thus, the model belongs
to the right-interpolating family \eqref{sys:mimo_right_family}.  Next,
multiplying \eqref{eq:mimo_left_sylv} by $\Pi$ from the right gives
\[
Q^2\Upsilon M\Pi+Q\Upsilon D\Pi+\Upsilon K\Pi
=RC_0\Pi+QRC_1\Pi.
\]
Using $T=\Upsilon\Pi$, this is precisely
\eqref{eq:mimo_left_reduced_sylv} with $P_{\rm l}=T$ for the realization
\eqref{eq:mimo_projected_right}, while
\eqref{eq:mimo_left_input_match} follows from $TG^{\rm r}=\Upsilon B$.
Therefore the same model also matches the left tangential data. 

Similarly, we can show that the following
realization  
\begin{align}
F_i^{\rm l}&=\Upsilon A_i\Pi T^{-1},\quad i=0,1,2,
\nonumber\\
G^{\rm l}&=\Upsilon B, \quad
H_j^{\rm l} =C_j\Pi T^{-1},\quad j=0,1.
\label{eq:mimo_projected_left}
\end{align}
matches the left tangential moments encoded by $(Q,R)$. Furthermore, the two realizations are related by the second-order coordinate
transformation, i.e.,
\[
F_i^{\rm l}=TF_i^{\rm r}T^{-1},\quad
G^{\rm l}=TG^{\rm r},\quad
H_j^{\rm l}=H_j^{\rm r}T^{-1}.
\]

Next, we give the algebraic construction of a two-sided MIMO
second-order reduced model. The same construction also has a time-domain
interpretation analogous to the SISO case: the two signal generators encode
the right and left interpolation data, and the auxiliary signal $d$ carries the coupled two-sided information. 
Consider the signal generators
\begin{align}
\dot\omega&=S\omega, & \theta&=L\omega,\label{eq:mimo_geneSL}\\
\dot\varpi&=Q\varpi+R\psi, &
d&=\varpi+(Q\Upsilon M+\Upsilon D-RC_1)x+\Upsilon M\dot x.
\label{eq:mimo_geneQR}
\end{align}
Here $\Upsilon$ is computed beforehand from \eqref{eq:mimo_left_sylv}; hence
\eqref{eq:mimo_geneQR} defines an auxiliary output and does not introduce an
implicit algebraic constraint.

\begin{pro}
\label{pro:mimo_interconnection}
Interconnect \eqref{sys:2o}, \eqref{eq:mimo_geneSL}, and
\eqref{eq:mimo_geneQR} through $u=\theta$ and $\psi=y$. On the invariant
manifold $x=\Pi\omega$, one has
\[
\dot d=Qd+\Upsilon BL\omega.
\]
\end{pro}

\begin{proof}
Differentiate $d$ and substitute the plant dynamics and the generator
equations. This gives
\[
\dot d
=Q\varpi+Ry+(Q\Upsilon M+\Upsilon D-RC_1)\dot x
+\Upsilon M\ddot x.
\]
Using $y=C_1\dot x+C_0x$ and
$M\ddot x=BL\omega-D\dot x-Kx$, the terms containing $C_1\dot x$ and
$D\dot x$ cancel, yielding
\[
\dot d=Q\varpi+Q\Upsilon M\dot x+(RC_0-\Upsilon K)x+\Upsilon BL\omega.
\]
By \eqref{eq:mimo_left_sylv},
$RC_0-\Upsilon K=Q(Q\Upsilon M+\Upsilon D-RC_1)$. Therefore
\begin{align*}
	\dot d
	=Q\!\left[\varpi+(Q\Upsilon M+\Upsilon D-RC_1)x+\Upsilon M\dot x\right]
	+\Upsilon BL\omega
	\\=Qd+\Upsilon BL\omega.
\end{align*}
This completes the proof.
\end{proof}

A useful consequence of the two-sided construction is that, at interpolation
points shared by the right and left data, the reduced model also satisfies a
bitangential first-derivative interpolation condition. This is weaker than
full MIMO derivative matching, but it shows precisely which Hermite condition is inherited from two-sided tangential matching.

\begin{thm}[Bitangential Hermite interpolation]
\label{thm:mimo_hermite}
Let $\widehat W$ be the transfer matrix of the projected reduced model
\eqref{eq:mimo_projected_right}. Suppose that a simple point $s_i$ belongs to
both $\sigma(S)$ and $\sigma(Q)$, with
$Sv_i=s_iv_i$ and $w_i^\ast Q=s_iw_i^\ast$. Let
$\ell_i=Lv_i$ and $\rho_i^\ast=w_i^\ast R$. If $s_i$ is not a pole of the
reduced model, then
\begin{align}
	\widehat W(s_i)\ell_i&=W(s_i)\ell_i, &
	\rho_i^\ast\widehat W(s_i)&=\rho_i^\ast W(s_i),
	\label{eq:mimo_two_sided_values}\\
	\rho_i^\ast\widehat W'(s_i)\ell_i&=\rho_i^\ast W'(s_i)\ell_i.
	\label{eq:mimo_bitangential_hermite}
\end{align}
\end{thm}

\begin{proof}
Let $V:=\Pi$ and $W_{\rm p}^\ast:=T^{-1}\Upsilon$. Then
$W_{\rm p}^\ast V=I_\nu$ and the projected matrices satisfy
\[
\widehat P(s)=W_{\rm p}^\ast P(s)V,\quad
\widehat N(s)=(C_1s+C_0)V,
\]
where $\widehat P(s)=F_2^{\rm r}s^2+F_1^{\rm r}s+F_0^{\rm r}$ and
$\widehat N(s)=H_1^{\rm r}s+H_0^{\rm r}$. From the right Sylvester equation,
$
P(s_i)Vv_i=B\ell_i,
$
and hence $\widehat P(s_i)v_i=G^{\rm r}\ell_i$. From the left Sylvester
equation,
$
w_i^\ast\Upsilon P(s_i)=\rho_i^\ast(C_1s_i+C_0).
$
These two identities imply the two value-matching conditions in
\eqref{eq:mimo_two_sided_values}. For the derivative, use
\[
W'(s)=C_1P(s)^{-1}B-(C_1s+C_0)P(s)^{-1}P'(s)P(s)^{-1}B,
\]
with $P'(s)=2sM+D$. The right identity gives
$P(s_i)^{-1}B\ell_i=Vv_i$, and the left identity gives
$\rho_i^\ast(C_1s_i+C_0)P(s_i)^{-1}=w_i^\ast\Upsilon$. Therefore
\[
\rho_i^\ast W'(s_i)\ell_i
=\rho_i^\ast C_1Vv_i-w_i^\ast\Upsilon P'(s_i)Vv_i.
\]
The reduced transfer matrix gives the same expression, because
$\widehat P'(s_i)=W_{\rm p}^\ast P'(s_i)V$ and
$w_i^\ast T W_{\rm p}^\ast=w_i^\ast\Upsilon$. This proves
\eqref{eq:mimo_bitangential_hermite}.
\end{proof}

\subsection{Difficulties in Extending the Derivative and Pole-Zero Results to MIMO}
\label{subsec:mimo_difficulties}

The results above show that the basic second-order time-domain moment
matching framework extends naturally to MIMO systems once tangential data are
used. However, the stronger SISO results in Sections~\ref{sec:MM_polezero}
and~\ref{sec:firstorder} do not admit a direct MIMO analogue.

First, the derivative-matching result in Theorem~\ref{thm:equivalence}
uses the fact that, in the SISO case, $W'(s_i)$ is a scalar moment at each
interpolation point. In the MIMO case, $W'(s_i)$ is a matrix, and one must
distinguish between the right derivative data $W'(s_i)\ell_i$, the left
derivative data $\rho_i^\ast W'(s_i)$, and the bitangential scalar data
$\rho_i^\ast W'(s_i)\ell_i$. The two-sided projection above guarantees only
the last quantity when the same interpolation point appears in the right and
left data. Matching all one-sided derivative directions, or the full matrix
$W'(s_i)$, requires additional tangential directions and additional
compatibility conditions between the associated Sylvester equations. Moreover,
the cascade realization used in the SISO proof of Theorem~\ref{thm:one2one}
depends on scalar input-output coupling, while in the MIMO case, the dimensions of the
intermediate signals depend on whether one targets right, left, or
bitangential derivative data. A complete parameterization of reduced
second-order models matching $W$ and the desired MIMO derivative data
therefore remains open.

Second, the pole-zero placement result relies on the scalar transfer function
structure. Pole assignment concerns the roots of the reduced quadratic pencil
and can still be formulated for MIMO systems, but the SISO proof uses scalar
moment directions and rank conditions such as $\rank(\Upsilon_{\rm p}\Pi)$ to
obtain linear constraints on the free parameters. In the MIMO case, prescribed poles
must be handled together with possible partial multiplicities and deflating
subspaces of the reduced quadratic pencil, and the resulting constraints are
not a direct copy of the SISO equations.
The obstruction is even more pronounced for zeros. In the SISO case, zeros
can be characterized by a scalar determinant condition. For a MIMO
second-order reduced model, zeros are transmission zeros, i.e., points at
which the Rosenbrock matrix
\[
\mathcal R_{\widehat\Sigma}(z)=
\begin{bmatrix}
z^2F_2+zF_1+F_0 & -G\\
zH_1+H_0 & 0
\end{bmatrix}
\]
drops rank relative to its normal rank. When $p\neq q$, this matrix is not
square, so the determinant condition used in the SISO zero-placement proof is
not available. Even when $p=q$, the zero structure includes input and output
zero directions and multiplicities, and these directions interact with the
moment-matching directions. Consequently, simultaneous MIMO moment matching
and pole-zero placement leads to coupled rank and direction constraints,
rather than the linear scalar constraints obtained in the SISO case.

For these reasons, this section can be viewed as an initial MIMO extension:
the tangential moment characterization, the one-sided and two-sided
second-order reduced families, and the bitangential Hermite property extends
cleanly. A full MIMO theory for first-order derivative matching and for
moment matching with pole-zero placement is an open problem and is left for
future work.

\section{Example} \label{sec:Example}

We verify the effectiveness of the proposed methods on a benchmark second-order system representing a mass-spring-damper chain, shown in Fig.~\ref{fig:msd}. The system consists of $n=200$ masses interconnected by springs and dampers. The coefficient matrices of the second-order system are  given as in \cite{cheng-i-iftime-necoara-CDC2024,simard-moreschini-astolfi-CDC2023}:
\begin{align*}
D & = \left[\begin{matrix}
	c_1 & -c_1 &  & &  \\ 
	-c_1 & c_1 + c_2 & -c_2 & & \\
	& \ddots &  \ddots & \ddots &  & \\
	&   &  \ddots & \ddots & -c_{n-1} \\
	& &  &  -c_{n-1} &  c_{n-1} + c_{n}
\end{matrix}\right],
\\
K & = \left[  \begin{matrix}
	k_1 & -k_1 &  & &  \\ 
	-k_1 & k_1 + k_2 & -k_2 & & \\
	& \ddots &  \ddots & \ddots &  & \\
	&   &  \ddots & \ddots & -k_{n-1} \\
	& &  &  -k_{n-1} &  k_{n-1} + k_{n}
\end{matrix}\right], \\
M &= \text{diag}\big[ m_1, \ \ldots, \ m_{n} \big], 	 B^\top   = C = \begin{bmatrix}
	1 & 0 & \cdots & 0
\end{bmatrix}
\end{align*}
where $m_i$, $k_i$, and $c_i$ are the masses, spring coefficients, and damping coefficients, respectively, for $i = 1, ..., n$. The external input $u$ is the external force acting on the first mass $m_1$, and we measure the displacement of the mass $m_1$ as the output. For simulation, we set $m_i = 1$, $c_i = 2$, and $k_i = 1$ for all $i = 1, \dots, n$.

\begin{figure}[t]
\centering 
\begin{tikzpicture}[scale=0.9]
	\tikzstyle{mass}=[rectangle,  drop shadow,
	minimum width=1cm, 
	minimum height=1cm,
	text centered, 
	fill=black!20]
	\tikzstyle{spring}=[thick,decorate,decoration={zigzag,pre length=0.08cm,post length=0.08cm,segment length=5}]
	\tikzstyle{damper}=[thick,decoration={markings,  
		mark connection node=dmp,
		mark=at position 0.5 with 
		{
			\node (dmp) [thick,inner sep=0pt,transform shape,rotate=-90,minimum width=10pt,minimum height=3pt,draw=none] {};
			\draw [thick] ($(dmp.north east)+(2pt,0)$) -- (dmp.south east) -- (dmp.south west) -- ($(dmp.north west)+(2pt,0)$);
			\draw [thick] ($(dmp.north)+(0,-5pt)$) -- ($(dmp.north)+(0,5pt)$);
		}
	}, decorate]
	\tikzstyle{ground}=[fill,pattern=north east lines,draw=none,minimum width=3mm,minimum height=0.2cm]
	
	\node (M) [mass] {$m_1$};
	\node (M2) [mass] at (2,0) {$m_2$};
	\node (M3) [outer sep=0pt,thick,minimum width=1cm, minimum height=1.6cm] at (4,0) {$...$};
	\node (M4) [mass] at (6,0) {$m_n$};
	\node[ground,minimum width=3mm,minimum height=1.5cm] at (7.7,0) (g1){};
	
	%
	\draw [thick] (M.south west) ++ (0.2cm,-0.125cm);
	\draw [thick] (M2.south west) ++ (0.2cm,-0.125cm);
	\draw [thick] (g1.south west) -- (g1.north west);	
	\draw [thick] (M4.south west) ++ (0.2cm,-0.125cm);
	
	\draw [spring] (M2.210) -- ($(M.north east)!(M.210)!(M.south east)$);
	\draw [spring] (M3.210) -- ($(M2.north east)!(M2.210)!(M2.south east)$);
	\draw [spring] (M4.210) -- ($(M3.north east)!(M3.210)!(M3.south east)$);
	\draw [spring] (g1.240) -- ($(M4.north east)!(M4.210)!(M4.south east)$);
	
	\draw [damper] (M2.160) -- ($(M.north east)!(M2.160)!(M.south east)$);
	\draw [damper] (M3.160) -- ($(M2.north east)!(M3.160)!(M2.south east)$);
	\draw [damper] (M4.160) -- ($(M3.north east)!(M4.160)!(M3.south east)$);
	\draw [damper] (g1.130) -- ($(M4.north east)!(M4.160)!(M4.south east)$);
	
	\node at (1,.8) {$c_1$};
	\node at (3,.8) {$c_2$};
	\node at (5,.8) {$c_{n-1}$};
	\node at (7,.8) {$c_n$};
	
	\node at (1,-.8) {$k_1$};
	\node at (3,-.8) {$k_2$};
	\node at (5,-.8) {$k_{n-1}$};
	\node at (7,-.8) {$k_n$};
	
\end{tikzpicture}
\caption{A mass-spring-damper system with $n$ masses.}
\label{fig:msd}
\end{figure}

We compare the performance of our proposed time-domain second-order moment matching methods for derivative matching (TDMM-FD) and pole placement (TDMM-PP), against two well-established techniques: the second-order balanced truncation (SOBT) \cite{chahlaoui2006balancing2o} and the second-order Arnoldi (SOAR) method \cite{bai2005Arnoldi2o}. For all methods, we construct a reduced-order model of size $r=10$.
The TDMM-based methods are configured to match moments at the same set of $r=10$ interpolation points, $\sigma(S) = \text{diag}(0.01, 0.02, 0.04, ..., 5.12)$. For the TDMM-FD approach, the reduced model $\bm{\widehat \Sigma}_f$ is constructed as in \eqref{eq:RedMats} to match the first-order derivatives at these points. For the TDMM-PP approach, the reduced model $\bm{\widehat \Sigma}_p$ is designed to place a prescribed set of stable poles at $\sigma(\Qp)$, satisfying the conditions of Theorem~\ref{thm:pole}.

A comparison of the relative reduction error in terms of relative $H_\infty$ error and computational time is shown in Table~\ref{tab:mor_comparison_hinf}. TDMM-FD outperforms both SOBT and SOAR in this example, while TDMM-PP has the largest reduction error. This is because of the placement of prescribed poles, which sacrifices certain approximation accuracy. Furthermore, we also observed in the simulations that the accuracy of TDMM-PP is highly sensitive to the choice of prescribed poles. Regarding computational efficiency, SOAR achieves the shortest computation time, and the proposed TDMM methods are significantly faster than SOBT. This is because our approaches require solving a quadratic Sylvester equation of dimension $n$, whereas SOBT requires the solution of two Lyapunov equations for the controllability and observability Gramians of the equivalent first-order system, which has a dimension of $2n$. The frequency responses of the full-order and reduced-order models are shown in Figure~\ref{fig:bode1}, where all methods demonstrate close agreement in the lower-frequency range, while SOAR and SOBT show larger deviations than TDMM-FD in the higher-frequency region.

\begin{table}[t]
\centering
\caption{Performance metrics of model reduction methods.}
\label{tab:mor_comparison_hinf}
\begin{tabular}{l c c c c}
	\toprule
	& \textbf{SOBT} & \textbf{SOAR} & \textbf{TDMM-FD} & \textbf{TDMM-PP} \\
	\midrule
	\textbf{Error ($\times 10^{-3}$)} & 1.809  & 3.790  & 0.905  & 36.639 \\
	\textbf{Time (s)}                          & 0.1467 & 0.0040 & 0.0778 & 0.0981 \\
	\bottomrule
\end{tabular}
\end{table}

\begin{figure}[t]
\centering
\includegraphics[width=0.5\textwidth]{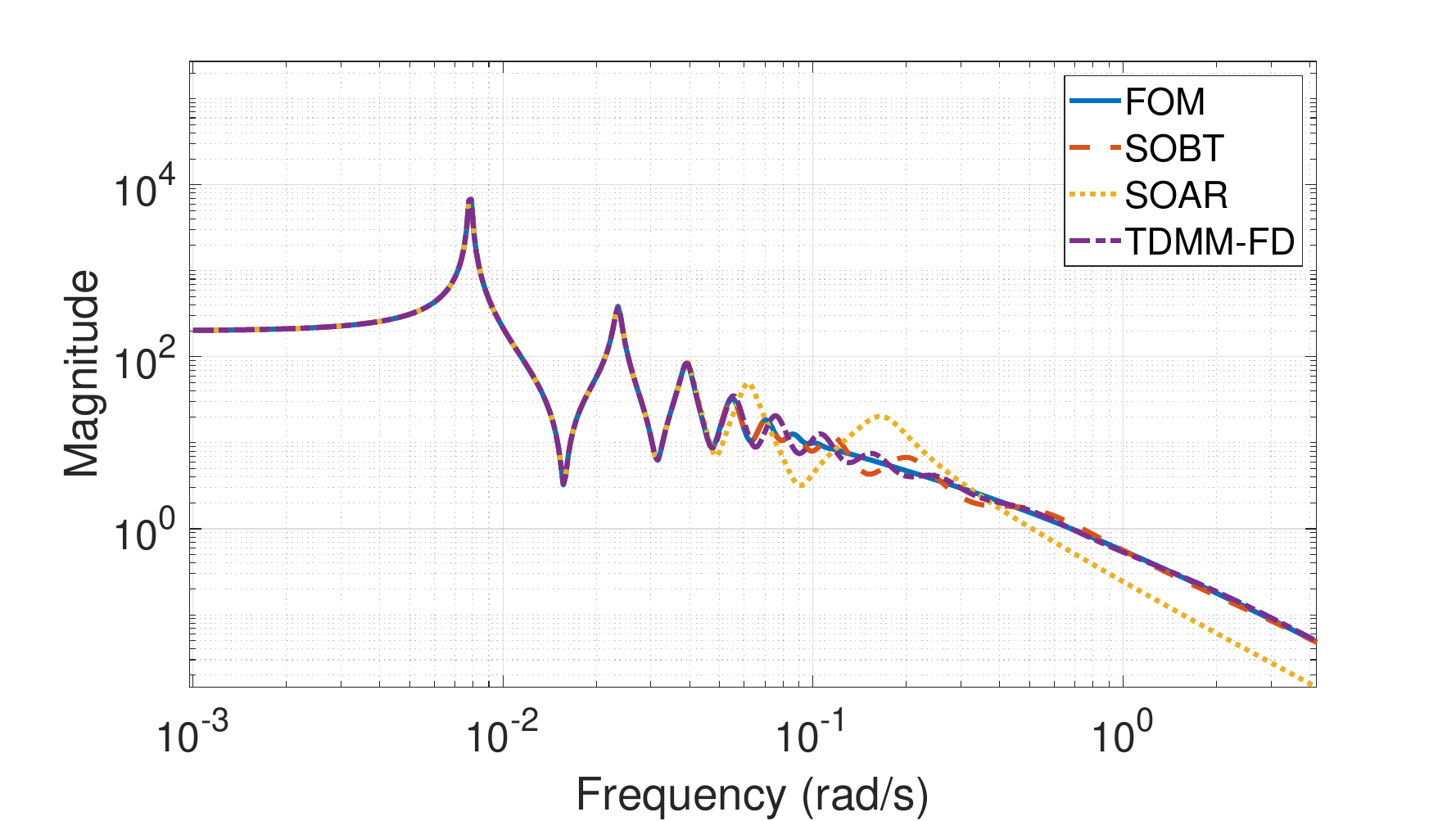} 
\caption{Frequency response comparison of the full-order model (FOM) and the reduced-order models ($r=10$) obtained via SOBT, SOAR, and TDMM-FD.}
\label{fig:bode1}
\end{figure}





\section{Conclusion} 
\label{sec:Conclusion}
A time-domain moment matching framework for second-order dynamical systems has been presented. The moments of a given second-order system are characterized by the unique solution of a second-order Sylvester equation, and families of parameterized reduced second-order models have been provided to match selected moments. Furthermore, we have also determined the free parameters to achieve moment matching at two distinct sets of interpolation points, to achieve moment matching and pole-zero placement, as well as matching the first-order derivative of the transfer function of the original second-order system. However, as noted in \cite{cheng-i-iftime-necoara-CDC2024}, the method has limitations, i.e., it is not adequate when interpolating at pure harmonic frequencies, when the matrices involved may become derogatory.

For MIMO second-order systems, we have shown that the basic Sylvester equation
moment characterization extends naturally to the right and left tangential
moments, leading to one-sided and two-sided tangential reduced models and a
bitangential Hermite interpolation property. A complete MIMO theory for
first-order derivative matching and for moment matching with pole-zero placement remains an open problem and is left for future work.


\section*{Appendices}
\renewcommand{\thesubsection}{\Alph{subsection}}

\subsection{Proof of Lemma \ref{lem:2Omoments1}}
\label{app:Lem:2Omoments1}
Let $\Pi = \left[\Pi_1, \Pi_2, \cdots, \Pi_{\nu} \right]\in \mathbb{R}^{n \times \nu}$ with $\Pi_i \in \mathbb{R}^n$. Then, the matrix equation \eqref{eq:Sylv_nu} is written as
\begin{equation*}
	M \Pi_i s_i^2 + D \Pi_i s_i + K \Pi_i = B l_i,\ \forall i = 1,2,\cdots, \nu.
\end{equation*}
leading to
$ 
\Pi_i = (Ms_i^2 + Ds_i +K)^{-1} B l_i. 
$ 
Thus, $\eta_0(s_i) =  C_0 \Pi_i + C_1 \Pi_i s_i $ for all $i = 1,2,\cdots, \nu$, which gives the result.

Analogously, we denote $\Upsilon^\top: = [\Upsilon_{1}^\top, \Upsilon_{1}^\top, \cdots, \Upsilon_{\nu}^\top]$ with $\Upsilon_{i} \in \mathbb{R}^{1 \times n}$. Then, \eqref{eq:Sylv_nu2} is equivalent to
\begin{equation*}
	\Upsilon_i M s_{\nu+i}^2 + \Upsilon_i D  s_{\nu+i} + \Upsilon_i K  =  r_i^\top C_0 + r_i^\top C_1 s_{\nu+i},
\end{equation*}
for all $i = 1,2, \cdots, \nu$. Thus, we obtain
\begin{equation*}
	\Upsilon_i = (r_i^\top C_0 + r_i^\top C_1 s_{\nu+i}) (Ms_{i+\nu}^2 + Ds_{i+\nu} +K)^{-1},
\end{equation*}
which gives the 0-moments $\eta_0(s_{\nu+1})$, $\cdots$, $\eta_0(s_{2\nu})$.

\subsection{Proof of Lemma \ref{lem:2Omoments2}}
\label{app:Lem:2Omoments2}
For simplicity, let
$\mathscr{F}(s) = (Ms^2 + Ds + K)^{-1}$.
The $k$-th order derivative ($k \geq 2$) of $\mathscr{F}(s)$   is given by
\begin{equation} \label{eq:dkF(s)}
	\begin{split}
		\frac{\opd^k}{\opd s^k} \mathscr{F}(s)  = & -k \mathscr{F}(s)
		\frac{\opd}{\opd s}\mathscr{F}(s)^{-1}
		\frac{\opd^{k-1}}{\opd s^{k-1}}\mathscr{F}(s) \\
		& -\frac{k(k-1)}{2} \mathscr{F}(s) \frac{\opd^2}{\opd s^2}\mathscr{F}(s)^{-1} 
		\frac{\opd^{k-2}}{\opd s^{k-2}}\mathscr{F}(s).
	\end{split}
\end{equation}

We start proving the first statement.
Let $ \Pi = \left[\Pi_0, \Pi_1, \cdots, \Pi_\nu \right]\in \mathbb{R}^{n \times (\nu+1)}$ with $  \Pi_0=(Ms_\star^2 + Ds_\star + K)^{-1}Bl_0$ and 
\begin{equation}\label{eq:barPi_k}
	\Pi_k : =  \frac{1}{k!}\left[ \left.\frac{\opd^k}{\opd s^k}\mathscr{F}(s)\right\rvert_{s = s_\star}\right]  B l_0, \ k=1,2, \cdots, \nu,
\end{equation}
where $l_0 \in \mathbb{R}^p$.
Then, it is not hard to verify from \eqref{eq:dkF(s)} that 
\begin{equation*}
	\begin{split}
		\left(Ms_\star^2  + Ds_\star + K\right){\Pi}_0 =& B l_0, 
		\\
		\left(Ms_\star^2  + Ds_\star + K\right)\Pi_1
		=&-\left(2Ms_\star + D \right) \Pi_0, 		
		\\
		\left(Ms_\star^2  + Ds_\star + K\right) \Pi_2
		=&-\left(2 Ms_\star + D \right) \Pi_1 - M  \Pi_{0}, 
		\\
		\vdots 
		\\
		\left(Ms_\star^2  + Ds_\star + K\right)\Pi_\nu
		=&-\left(2M s_\star +D \right)\Pi_{\nu-1}
		- M \Pi_{\nu-2}.
	\end{split}
\end{equation*}	
The above equations can be rewritten in a compact form: 
\begin{equation} \label{eq:barSylv_k}
	M \Pi \bar{S}^2  + D \Pi \bar{S} + K\Pi = B \bar L,
\end{equation} 
with $\bar L: =\begin{bmatrix}
	l_0 & 0 & ... & 0
\end{bmatrix}$ and 
\begin{equation*}
	\bar S : = \begin{bmatrix}
		s_\star & 1 & 0 & \cdots & 0 \\
		0 & s_\star & 1 & \cdots & 0 \\
		\vdots & \vdots & \ddots & \ddots & \vdots \\
		0 & \cdots & 0 & s_\star & 1 \\
		0 & \cdots & \cdots & 0 & s_\star\\ 
	\end{bmatrix} \in \mathbb{R}^{(\nu+1) \times (\nu+1)}.
\end{equation*}

Next, the moments at $\sigma(S)$ are characterized. The 0-moment is obtained directly as
\begin{equation*}
	\eta_0(s_\star) = ( C_0 + C_1s_\star)\mathscr{F}(s_\star) B = C_0 \Pi_0 + C_1 \Pi_0 s_\star.
\end{equation*}
Furthermore, note that  
\begin{equation*}\label{eq:dkCF(s)}
	\begin{split}
		&\frac{\opd^k}{\opd s^k}\left[(C_0 + C_1s) \mathscr{F}(s)\right]
		\\
		=& 
		kC_1\frac{\opd^{k-1}}{\opd s^{k-1}} \mathscr{F}(s) +  (C_0 + C_1s) \frac{\opd^k}{\opd s^k}\mathscr{F}(s).  
	\end{split}	
\end{equation*}
Thus, by the definition of the $k$-moment in \eqref{eq:k-moment}, we have
\begin{equation*}
	\begin{split}
		\eta_k(s_\star) = & \frac{(-1)^k}{k!} \left.\frac{\opd^k}{\opd s^k}\left[(C_0 + C_1s) \mathscr{F}(s)\right] B \right\rvert_{s = s_\star}\\
		=& (-1)^k \left[ C_0 \Pi_k + C_1(\Pi_{k-1} + \Pi_k s_\star)   \right],
	\end{split}
\end{equation*}
for $k = 1,2,\cdots,\nu$.	
Then, the following relation holds. 
\begin{equation*} \label{eq:one2one}
	\begin{bmatrix}
		\eta_0(s_\star)&
		\eta_1(s_\star)&
		\cdots &
		\eta_\nu(s_\star) 
	\end{bmatrix} 
	= 
	(C_0 \Pi + C_1 \Pi S) \Phi_\nu,
\end{equation*}
with $
\Phi_\nu = \mathrm{diag}(1, -1, 1, \cdots, (-1)^\nu).
$
Therefore, there is a one-to-one relation between the moments $\eta_k(s_\star)$ and the entries of the matrix $C_0 {\Pi} + C_1 {\Pi} S$.

Notice that the pair $(\bar{L}, \bar{S})$ is observable for any $s_\star$. For a given pair $(L, S)$ that is observable, there exists a unique
invertible matrix $T \in \mathbb{R}^{(\nu+1) \times (\nu+1)}$ such that $\bar S = T {S} T^{-1}$ and  $\bar L = T^{-1} {L} $. Substituting $\bar S$ and $\bar L$ into \eqref{eq:barSylv_k} yields the Sylvester equation in \eqref{eq:Sylv_k}. 	

The second statement can be proved following a similar procedure and hence is omitted.

\subsection{Proof of Theorem \ref{thm:Moments}}
\label{app:thm:Moments}

\label{app:thm:Moments}
	It follows from Lemma \ref{lem:2Omoments1} and Lemma \ref{lem:2Omoments2} that the moments of $W(s)$ at $\sigma(S)$ and $\sigma(Q)$ are characterized by
	$C_0 \Pi + C_1 \Pi S$ and $\Upsilon B$, respectively, where $\Pi$ and $\Upsilon$ satisfy the second-order Sylvester equations in  \eqref{eq:Sylv1} and \eqref{eq:Sylv2}, respectively. It remains to show that these second-order Sylvester equations have unique solutions.  
	
	Let
	\[
	P(s):=Ms^2+Ds+K .
	\]
	Since $\sigma(S)\cap\Omega=\emptyset$, the matrix $P(\lambda)$ is nonsingular for every $\lambda\in\sigma(S)$. Take a complex triangular decomposition $S=V_SJ_SV_S^{-1}$, where $J_S$ is upper triangular and has diagonal entries $\lambda_1,\ldots,\lambda_\nu$. Multiplying \eqref{eq:Sylv1} by $V_S$ from the right and setting
	\[
	X:=\Pi V_S,\qquad \bar L:=LV_S,
	\]
	gives
	\begin{equation}\label{eq:quadSylvRightTri}
		MXJ_S^2+DXJ_S+KX=B\bar L.
	\end{equation}
	Write $X=[\chi_1,\ldots,\chi_\nu]$ and $\bar L=[\bar l_1,\ldots,\bar l_\nu]$. Because $J_S$ and $J_S^2$ are upper triangular, the $j$-th column of \eqref{eq:quadSylvRightTri} has the form
	\[
	P(\lambda_j)\chi_j=B\bar l_j-r_j(\chi_1,\ldots,\chi_{j-1}),
	\]
	where $r_j$ depends only on the previously determined columns. Since $P(\lambda_j)$ is nonsingular, $\chi_j$ is uniquely determined. Proceeding recursively for $j=1,\ldots,\nu$ gives a unique $X$, and hence a unique $\Pi=XV_S^{-1}$.
	
	The proof for \eqref{eq:Sylv2} is analogous. Since $\sigma(Q)\cap\Omega=\emptyset$, $P(\mu)$ is nonsingular for every $\mu\in\sigma(Q)$. Let $Q=V_QJ_QV_Q^{-1}$, where $J_Q$ is upper triangular with diagonal entries $\mu_1,\ldots,\mu_\nu$. Multiplying \eqref{eq:Sylv2} by $V_Q^{-1}$ from the left and setting
	\[
	Y:=V_Q^{-1}\Upsilon,\qquad \bar R:=V_Q^{-1}R,
	\]
	gives
	\begin{equation}\label{eq:quadSylvLeftTri}
		J_Q^2YM+J_QYD+YK=\bar R C_0+J_Q\bar R C_1.
	\end{equation}
	Write the rows of $Y$ as $\eta_1,\ldots,\eta_\nu$. Since $J_Q$ and $J_Q^2$ are upper triangular, the $i$th row of \eqref{eq:quadSylvLeftTri} has the form
	\[
	\eta_iP(\mu_i)=\bar r_i(C_0+\mu_iC_1)-s_i(\eta_{i+1},\ldots,\eta_\nu),
	\]
	where $s_i$ depends only on rows with larger indices. As $P(\mu_i)$ is nonsingular, $\eta_i$ is uniquely determined. Solving backward for $i=\nu,\ldots,1$ gives a unique $Y$, and hence a unique $\Upsilon=V_QY$. Therefore, $\Pi$ and $\Upsilon$ are uniquely determined, and the theorem follows.
	
	\subsection{Proof of Proposition \ref{pro:interconnection}}
	\label{app:pro:interconnection}
		From \eqref{geneQR}, we have
		\begin{align} \label{eq:dot_d}
			\dot{d} = \dot{\varpi} + (Q\Upsilon M + \Upsilon D - RC_1) \dot{x} + \Upsilon M \ddot{x},
		\end{align}
		where $\dot{\varpi} = Q\varpi + R(C_1 \dot{x} + C_0 x)$. Moreover, on the manifold $\mathcal{M}$, it holds that $\dot{x} = \Pi \dot{\omega} = \Pi S \omega$, and $\ddot{x} = \Pi S \dot{\omega} = \Pi S^2 \omega$, which are substituted into \eqref{eq:dot_d} and lead to 
		\begin{align} \label{eq:dot_d2}
			\dot{d}  = Q\varpi + \left[RC_0 \Pi + (Q\Upsilon M + \Upsilon D ) \Pi S  + \Upsilon M \Pi S^2\right] \omega.
		\end{align} 
		Observe that \eqref{eq:Sylv1} and \eqref{eq:Sylv2} imply that
		\begin{align*} 
			&RC_0 \Pi +  \Upsilon D \Pi S + \Upsilon M \Pi S^2
			\\ = & Q^2 \Upsilon M \Pi +  Q  \Upsilon D \Pi + \Upsilon B L - Q R_1 \Pi 
			\\= &Q(Q\Upsilon M + \Upsilon D - RC_1) \Pi + \Upsilon B L.
		\end{align*} 
		Consequently, \eqref{eq:dot_d2} is further written as
		\begin{align*}  
			\dot{d} & = Q \left[\varpi + (Q\Upsilon M + \Upsilon D - RC_1) x + \Upsilon M \dot{x}\right] + \Upsilon BL \omega\\
			& = Q d +  \Upsilon BL \omega,
		\end{align*} 
		which completes the proof.
		
		\subsection{Proof of Theorem \ref{thm:twoside}}
		\label{app:thm:twoside}
			We start with the proof for $\bm{\widehat{\Sigma}}_G$. With the parameters in \eqref{eq:redMats}, we obtain
			$F_0 = \Pi^\dagger K$, and $H_0 = C_0 \Pi$.	
			Consider a system $\bm{\widehat{\Sigma}}_G$ in form of \eqref{sys:2orG}, which connects the signal generator 
			\begin{align*} 
				&\dot{w} = Q w + R \widehat\psi, w(0) = 0, \nonumber \\ 
				&\zeta = w + (Q P F_2 + P F_1 - RH_1) \xi + P F_2 \dot{\xi}.
			\end{align*}
			as a downstream system with $\widehat \psi = \eta$. Then, the system $\bm{\widehat{\Sigma}}_G$ matches the moments $\Upsilon B$, with $\Upsilon$ the unique solution of \eqref{eq:Sylv2}, at $\sigma(Q)$ if and only if 
			\begin{align} \label{eq:dot_zeta1}
				\dot{\zeta} & = Q \zeta + \Upsilon B u \nonumber\\
				& = Qw + (Q^2 P F_2 + Q P F_1 - Q RH_1) \xi + Q P F_2 \dot{\xi} + \Upsilon B u.
			\end{align}
			We refer to \cite{Astolfi2010MMCDC,Ionescu2016TwoSided} for similar reasoning in the case of first-order systems. Note that
			\begin{align}\label{eq:dot_zeta2}
				\dot{\zeta} = & \dot{w} + (Q P F_2 + P F_1 - RH_1) \dot{\xi} + P F_2 \ddot{\xi} 
				\nonumber\\
				= 	& Q w + R \left[H_1 \dot{\xi} + (C_0 \Pi + C_1 \Pi S  - H_1S) \xi\right] \nonumber \\
				&-RH_1 \dot{\xi} + Q P F_2 \dot{\xi} + P F_1 \dot{\xi} + P F_2 \ddot{\xi} 
				\nonumber \\
				= & Q w + R(C_0 \Pi + C_1 \Pi S  - H_1S)  \xi \nonumber\\
				& + Q P F_2 \dot{\xi} + PGu - P (GL - F_2 S^2 - F_1 S) \xi. 
			\end{align}
			Therefore, from \eqref{eq:dot_zeta1} and \eqref{eq:dot_zeta2}, the system $\bm{\widehat{\Sigma}}_G$ matches the moments $\Upsilon B$, if and only if the parameters $F_1$, $F_2$, $G$ and $H_1$ in $\bm{\widehat{\Sigma}}_G$ satisfy
			\begin{equation*} \label{eq:PGYB}
				P G = \Upsilon B, 
			\end{equation*}
			and
			\begin{align} \label{eq:sylvthm}
				Q^2 P F_2 + Q P F_1 + P (GL - F_2 S^2 - F_1 S) 
				\nonumber\\ = R(C_0 \Pi + C_1 \Pi S  - H_1S) + Q RH_1.
			\end{align}
			It is verified that $P = \Upsilon \Pi$ is the unique solution of \eqref{eq:sylvthm} due to $\widehat{\Omega} \cap \sigma(Q) = \emptyset$. Moreover, since $\Upsilon$ and $\Pi$ are unique, the parameter matrices of $\bm{\widehat{\Sigma}}_G$ in \eqref{eq:redMats}   is unique.
			
			
			The proof for $\bm{\widehat{\Sigma}}_H$ with parameters in \eqref{eq:redMats2}  follows similar arguments. Besides, the equivalence of  $\bm{\widehat{\Sigma}}_G$ and $\bm{\widehat{\Sigma}}_H$ follows from the nonsingularity of $\Upsilon \Pi$, with which there exists a coordinate transformation between the two systems.
		
		\subsection{Proof of Theorem \ref{thm:pole}}
		\label{app:thm:pole}
			Observe that $\widehat{\Omega}$ of the reduced model $\bm{\widehat{\Sigma}}_G$ is characterized by the solution of the following determinant equation
			\begin{align*}
				|\rho(\lambda)| =  |\lambda^2 F_2 + \lambda F_1 + (GL - F_2 S^2 - F_1 S)| 
				= 0.
			\end{align*}
			With the equations in \eqref{eq:constraints}, the matrix polynomial in the above determinant can be rewritten as
			\begin{align} \label{eq:poleprove1}
				\Upsilon_\mathrm{p} \Pi \rho(\lambda) =&\Upsilon_\mathrm{p} \Pi  \left[ \lambda^2  F_2 + \lambda  F_1 +  (GL -  F_2 S^2 -  F_1 S)\right]
				\nonumber \\
				= & 
				\lambda^2  \Upsilon_\mathrm{p} M \Pi + \lambda  \Upsilon_\mathrm{p} D \Pi 
				\nonumber\\& +  \Upsilon_\mathrm{p} BL - \Upsilon_\mathrm{p} M \Pi   S^2 -  \Upsilon_\mathrm{p} D \Pi  S.
			\end{align}
			Moreover, it follows from the second-order Sylvester equation \eqref{eq:Sylv1} that
			\begin{align}\label{eq:poleprove2}
				& \Upsilon_\mathrm{p} BL - \Upsilon_\mathrm{p} M \Pi   S^2 -  \Upsilon_\mathrm{p} D \Pi  S 
				\nonumber\\
				= 	&
				\Upsilon_\mathrm{p} (M \Pi S^2 + D \Pi S + K \Pi) - \Upsilon_\mathrm{p} M \Pi   S^2 -  \Upsilon_\mathrm{p} D \Pi  S 
				\nonumber\\ = & \Upsilon_\mathrm{p} K \Pi.
			\end{align}
			Let \eqref{eq:Sylv_Qp} be post-multiplied by $\Pi$, which yields
			\begin{equation}\label{eq:poleprove3}
				\Upsilon_\mathrm{p} K \Pi =  - Q_\mathrm{p}^2 \Upsilon_\mathrm{p} M \Pi  -   Q_\mathrm{p} \Upsilon_\mathrm{p} D \Pi,
			\end{equation}
			as $C_{\mathrm{p}0}$ and $C_{\mathrm{p}1}$ are chosen such that $C_{\mathrm{p}0} \Pi = C_{\mathrm{p}1} \Pi = 0$.
			
			Combining \eqref{eq:poleprove1}, \eqref{eq:poleprove2}, and \eqref{eq:poleprove3}, we obtain
			\begin{align} \label{eq:poleprove_rho}
				\Upsilon_\mathrm{p} \Pi \rho(\lambda)  = (\lambda I - Q_\mathrm{p})  
				\left[(\lambda I + Q_\mathrm{p}) \Upsilon_\mathrm{p} M \Pi +  \Upsilon_\mathrm{p} D \Pi\right].
			\end{align}
			
			Notice that $\lambda \in \sigma(Q_\mathrm{p})$ if and only if there exists a left eigenvector $v$ such that $v^\top (\lambda I - Q_\mathrm{p}) = 0$.
			Then, we obtain from \eqref{eq:poleprove_rho} that 
			\begin{align*}
				v^\top (\lambda I - Q_\mathrm{p})  
				\left[(\lambda I + Q_\mathrm{p}) \Upsilon_\mathrm{p} M \Pi +  \Upsilon_\mathrm{p} D \Pi\right]  = 0
			\end{align*}
			i.e. $v^\top \Upsilon_\mathrm{p} \Pi \rho(\lambda)  = \bar{v}^\top  \rho(\lambda) = 0 $ with $\bar{v} = (\Upsilon_\mathrm{p} \Pi)^\top v \in \mathbb{C}^{\nu}$. It means that there is a vector $\bar{v}_r \in \mathbb{C}^{\nu}$ such that $\rho(\lambda) \bar{v}_r = 0$, i.e. 
			$F_2^{-1} ( \lambda^2 F_2 + \lambda F_1 + (GL - F_2 S^2 - F_1 S)) =0 $,
			which 
			is equivalent to  
			\begin{equation*}
				\left(	\lambda I
				-
				\begin{bmatrix}
					0 &  I \\
					- F_2^{-1}(GL - F_2 S^2 - F_1 S) & - F_2^{-1} F_1
				\end{bmatrix}\right)
				\begin{bmatrix}
					\bar{v}_r 
					\\
					\lambda  \bar{v}_r
				\end{bmatrix}
				= 0,
			\end{equation*}
			Therefore, for any $\lambda \in \sigma(Q_\mathrm{p})$, we have $|\rho(\lambda)| = 0$, i.e. $\lambda \in \widehat{\Omega}$.
			
			%

			\subsection{Proof of Theorem \ref{thm:zero}}
			\label{app:thm:zero}
			%
			Let $z\in\sigma(\Qz)$. Then, there exists $w\in\mathbb C^{m_\z}, w\ne 0$ such that $w^\top(zI-\Qz)=0$. Note that, if \eqref{eq_zeros_assign_H1} holds, then \eqref{eq_assign_zeros} also holds if, for instance,
			{\small\begin{align*}
					\begin{bmatrix}
						w^\top\Uz\Pi & w^\top\Rz
					\end{bmatrix} &\begin{bmatrix}
						F_2z_i^2+F_1z_i+(GL-F_2S^2-F_1S) &  -G \\ H_1z_i+(C_0\Pi+C_1\Pi S-H_1S) & 0
					\end{bmatrix} \\ &=0.
			\end{align*}}
			Equivalently, 
			\begin{align*}
				w^\top\Uz\Pi(z^2F_2 & +zF_1+GL-F2S^2-F_1S) \\ 
				&+w^\top\Rz(C_1\Pi z+C_0\Pi)=0.
			\end{align*}
			Hence,
			\begin{align*}
				&z^2w^\top\Uz\Pi F_2-zw^\top(\Uz\Pi F_1+RC_1\Pi) \\
				&+W^\top(\Uz\Pi GL-\Uz\Pi F_2S^2-\Uz\Pi F_1S+RC_0\Pi)=0.
			\end{align*}
			Since $zw^\top=w^\top\Qz$ and performing some calculations, one can equivalently write
			\begin{align}\label{eq_Q}
				&w^\top\Qz^2\Uz\Pi F_2+w^\top\Qz\Uz\Pi F_1\nonumber \\
				&+w^\top(\Qz RC_1\Pi+RC_0\Pi) \\
				&+w^\top(\Uz\Pi GL-\Uz\Pi FS^2-\Uz\Pi F_1S)=0. \nonumber
			\end{align}
			Multiplying \eqref{eq:Sylv1} with $\Uz$ to the left and employing $\Uz B L=0$ yields that
			\begin{align*}
				\Uz K\Pi=-(\Uz M\Pi S^2+\Uz D\Pi S).
			\end{align*}
			Moreover, multiplying \eqref{eq:Sylv_Uz} with $\Pi$ to the right and substituting $\Uz K\Pi$ yields that
			\begin{align}\label{eq_RCPi}
				QRC_1\Pi+RC_0\Pi= Q^2\Uz M\Pi&+Q\Uz D\Pi \\ &-(\Uz M\Pi S^2+\Uz D\Pi S).\nonumber
			\end{align}
			
			Substituting \eqref{eq_RCPi} into \eqref{eq_Q} yields
			
			
			\begin{align*}
				&w^\top\Qz^2\Uz\Pi F_2+w^\top\Qz\Uz\Pi F_1 \\
				&+w^\top(Q^2\Uz M\Pi+\Qz\Uz D\Pi-\Uz M\Pi S^2-\Uz D\Pi S) \\
				&+w^\top(\Uz\Pi GL-\Uz\Pi F_2S^2-\Uz\Pi F_1 S)=0.
			\end{align*}
			
			Taking the terms $w^\top\Qz^2, w^\top\Qz$ and $w^\top$ as common factors, one further writes
			
			\begin{align*}
				w^\top\Qz^2(\Uz\Pi F_2&+\Uz M\Pi)+w^\top\Qz(\Uz\Pi F_1+\Uz D\Pi)  \\
				&+w^\top\left(\Uz\Pi GL-\Uz M\Pi S^2-\Uz D\Pi S\right. \\
				&\left.-\Uz\Pi F_2S^2-\Uz\Pi F_1 S\right)=0,
			\end{align*}
			which holds if relations \eqref{eq_zeros_assign_F2}-\eqref{eq_zeros_assign_G} are satisfied.
			
			\subsection{Proof of Theorem \ref{thm:one2one}}
			\label{app:thm:one2one}
				The scheme of proving this result follows similar arguments as in \cite{Astolfi2010MMCDC,Ionescu2016TwoSided}, but the details require nontrivial modifications due to the second-order structure of the system. Observe that
				\begin{align*}
					\dot{d} & = \dot{\varpi} + (S\Upsilon M + \Upsilon D) \dot{z} + \Upsilon M \ddot{z} 
					\nonumber \\
					& = S \varpi - L^\top C z + S \Upsilon M \dot{z} + \Upsilon(\widetilde{u} - Kz),
				\end{align*}
				which, using $\widetilde{u} = \widetilde{y} = 2 M \dot{x} + D x$ and  \eqref{eq:Sylv12}, leads to
				\begin{align*}
					\dot{d} 
					= & S \varpi - L^\top C z + S \Upsilon M \dot{z} + 2 \Upsilon M \dot{x} + 2 \Upsilon D x \nonumber\\ 
					&  - (-L^\top C - S^2 \Upsilon M - S \Upsilon D)z, 
					\nonumber\\
					= & S \left[ \varpi + (S\Upsilon M + \Upsilon D) {z} + \Upsilon M \dot{z} \right] + \Upsilon (2 M \dot{x} + D x)
					\nonumber \\
					= & S d + \Upsilon \begin{bmatrix}
						D & 2M
					\end{bmatrix} \begin{bmatrix}
						x(t) \\ \dot{x}(t)
					\end{bmatrix},
				\end{align*}	
				Since 
				$M (\ddot{x} - \Pi \ddot{\omega}) + D (\dot{x} - \Pi \dot{\omega}) + K ({x} - \Pi {\omega})
				=  M \ddot{x} + D \dot{x} + K {x} - (M\Pi S^2  + D \Pi S  + K \Pi {\omega}) \omega
				=  B u - B L \omega = 0,
				$
				we obtain
				\begin{equation*}
					\begin{bmatrix}
						x(t) \\ \dot{x}(t)
					\end{bmatrix} = 
					\begin{bmatrix}
						\Pi \\ \Pi S
					\end{bmatrix} \omega +
					\e^{\mathcal{A}t}
					\begin{bmatrix}
						x(0) - \Pi \omega(0) \\
						\dot{x}(0) - \Pi S \omega(0) \\
					\end{bmatrix},
				\end{equation*}
				which yields 
				\begin{align*}
					\dot{d} 
					= & S d + (2\Upsilon M \Pi S + \Upsilon D \Pi) \omega + \Delta_d,
				\end{align*}	
				with $\Delta_d: = \Upsilon \begin{bmatrix}
					D & 2M
				\end{bmatrix}\e^{\mathcal{A}t}
				\begin{bmatrix}
					x(0) - \Pi \omega(0) \\
					\dot{x}(0) - \Pi S \omega(0) \\
				\end{bmatrix}$.
				Denote $\mathscr{D}(s)$ as the Laplace transform of $d(t)$. Note that the term $\Delta_d$ vanishes in the steady-state response, and thus
				\begin{equation*}
					\mathscr{D}(s) = (sI - S)^{-1} \Upsilon (2 M \Pi S + D \Pi) \mathscr{W}(s),
				\end{equation*}
				where $\mathscr{W}(s)$ denotes the Laplace transform of $\omega(t)$. Moreover,  we obtain from  \eqref{geneSL} that $\mathscr{W}(s) : = (sI - S)^{-1} \omega(0)$, which leads to
				\begin{align*}
					\mathscr{D}(s) = (sI - S)^{-1} \Upsilon (2 M s + D )\Pi (sI - S)^{-1} \omega(0) \nonumber\\
					- 2(sI - S)^{-1} \Upsilon M \Pi \omega(0).
				\end{align*}
				Denote
				$\mathscr{F}(s) = (Ms^2 + Ds + K)^{-1}$. Then, we have
				\begin{align*}
					&(sI - S)^{-1} \Upsilon \mathscr{F}^{-1} 
					\\
					= & (sI - S)^{-1} (\Upsilon M s^2 + \Upsilon D s + \Upsilon K) 
					\\
					= & (sI - S)^{-1} (\Upsilon M s^2 + \Upsilon D s - L^\top C - S^2 \Upsilon M - S\Upsilon D)
					\\
					= & (sI + S) \Upsilon M + \Upsilon D - L^\top C,
				\end{align*}
				and similarly,
				\begin{align*}
					& \mathscr{F}^{-1} \Pi (sI - S)^{-1} 
					=   M \Pi (sI + S) + D \Pi + BL.
				\end{align*}
				Therefore, we can rewrite the first term of $\mathscr{D}(s)$ as 	
				\begin{align*}
					\mathscr{D}_1(s) &= (sI - S)^{-1} \Upsilon \mathscr{F}^{-1}(s) \mathscr{F}(s) (2 M s + D ) 
					\nonumber\\
					& \quad \quad \quad \quad \cdot \mathscr{F}(s) \mathscr{F}^{-1}(s) \Pi (sI - S)^{-1} \omega(0),
					\nonumber\\
					& = \left[(sI + S) \Upsilon M + \Upsilon D - L^\top C\right] \mathscr{F}(s) (2 M s + D ) 
					\nonumber\\
					& \quad \quad \quad \quad \cdot \mathscr{F}(s) \left[M \Pi (sI + S) + D \Pi + BL \right]\omega(0) 
					\nonumber\\
					&  = -L^\top C \mathscr{F}(s) (2 M s + D) \mathscr{F}(s) B L + \cdots
				\end{align*}
				Hence, the steady-state response of $d(t)$ contains terms of the form 
				$
				L^\top \frac{W(s_i)}{(s-s_i)^2} L, 
				$
				with $s_i \in \sigma(S)$, proving the claim. 
			
			\subsection{Proof of Theorem \ref{thm:equivalence}}	
			\label{app:thm:equivalence}
				First, with the reduced matrices in \eqref{eq:RedMats}, we obtain $F_0 = \Pi^\dagger K$ and $H = C \Pi$. It is not hard to verify, according to Proposition~\ref{pro:family}, that $\bm{\widehat \Sigma}_G$ matches the moments of $W(s)$ at $\sigma(S)$. Then, we prove that $\bm{\widehat \Sigma}_G$ also matches the moments of $W^\prime(s)$, which means that $W_L(s_i) W_R(s_i) = - H (F_2 s_i^2 + F_1 s_i + F_0)^{-1} (2 F_2s_i + F_1) (F_2 s_i^2 + F_1 s_i + F_0)^{-1} B$, for all $s_i \in \sigma(S)$,
				with the transfer function $W_L(s)$, $W_R(s)$ defined in \eqref{eq:WLWR}. 
				
				Observe that $2 F_2s_i + F_1 = P \Upsilon (2 M s_i + D) \Pi$,
				where $P: = (\Upsilon \Pi)^{-1}$. Therefore, the moment matching is achieved if 
				\begin{equation} \label{eq:match1}
					C (M s_i^2 + D s_i + K)^{-1} = C \Pi (F_2 s_i^2 + F_1 s_i + F_0)^{-1} P \Upsilon,
				\end{equation}
				and 
				\begin{equation}\label{eq:match2}
					(M s_i^2 + D s_i + K)^{-1} B = \Pi (F_2 s_i^2 + F_1 s_i + F_0)^{-1} P \Upsilon B.
				\end{equation}
				It follows from the second-order Sylvester equations \eqref{eq:Sylv1} and \eqref{eq:Sylv2} that
				\begin{align*}
					S^2 P^{-1} F_2 + S P^{-1} F_1 + P^{-1} F_0 = L^\top C \Pi, 
					\\F_2 S^2 + F_1 S + F_0 = P \Upsilon B L,
				\end{align*}
				with $F_2$, $F_1$, and $F_0$ in \eqref{eq:RedMats}. Thus, \eqref{eq:match1} and \eqref{eq:match2} are satisfied.
				
				Besides, we note that the systems  $\bm{\widehat{\Sigma}}_G$ and $\bm{\widehat{\Sigma}}_H$ are equivalent, as there exists a coordinate transformation between the two systems due to the non-singularity of $\Upsilon \Pi$.

			\bibliographystyle{IEEEtran}
			\bibliography{MomentMatching,TCI_articles,TCI_books,TCI_phdthesis}

		\end{document}